\newcommand{\nc}{\newcommand}
\numberwithin{equation}{section}
\newtheorem{thm}{Theorem}[section]
\newtheorem{prop}[thm]{Proposition}
\newtheorem{lem}[thm]{Lemma}
\newtheorem{cor}[thm]{Corollary}
\newtheorem{rem}[thm]{Remark}
\newtheorem{definition}[thm]{Definition}
\newtheorem{example}[thm]{Example}
\newtheorem{dfn}[thm]{Definition}
\newtheorem*{thma}{Theorem A}
\newtheorem*{thmb}{Theorem B}
\newtheorem*{thmc}{Theorem C}
\nc{\gl}{\mathfrak{gl}}
\nc{\GL}{\mathfrak{GL}}
\nc{\g}{\mathfrak{g}}
\nc{\gh}{\widehat\g}
\nc{\h}{\mathfrak{h}}
\nc{\la}{\lambda}
\nc{\al}{\alpha }
\nc{\be}{\beta }
\nc{\ve}{\varepsilon }
\nc{\om}{\omega }
\nc{\ta}{\theta}
\nc{\ch}{{\mathop {\rm ch}}}
\nc{\Tr}{{\mathop {\rm Tr}\,}}
\nc{\Id}{{\mathop {\rm Id}}}
\nc{\ad}{{\mathop {\rm ad}}}
\nc{\bra}{\langle}
\nc{\ket}{\rangle}
\nc{\x}{{\bf x}}
\nc{\bm}{{\bf m}}
\nc{\bs}{{\bf s}}
\nc{\br}{{\bf r}}
\nc{\bb}{{\bf b}}
\nc{\bk}{{\bf k}}
\nc{\bp}{{\bf p}}
\nc{\pa}{\partial}
\nc{\ld}{\ldots}
\nc{\cd}{\cdots}
\nc{\hk}{\hookrightarrow}
\nc{\T}{\otimes}
\nc{\gr}{\mathrm{gr}}
\nc{\ov}{\overline}
\nc{\cO}{\mathcal O}
\nc{\msl}{\mathfrak{sl}}
\nc{\mgl}{\mathfrak{gl}}
\nc{\U}{\mathrm U}
\nc{\V}{\EuScript V}
\nc{\cL}{\mathcal{L}}
\nc{\Res}{\mathrm{Res\ }}
\nc{\cA}{\mathcal{A}}
\nc{\bA}{\mathbb{A}}
\nc{\bD}{\mathbb{D}}
\nc{\fQ}{\mathfrak{Q}}
\nc{\Spec}{\operatorname{Spec}}
\nc{\Proj}{\operatorname{Proj}}
\newcommand{\bC}{{\mathbb C}}
\newcommand{\bZ}{{\mathbb Z}}
\newcommand{\bP}{{\mathbb P}}
\newcommand{\fh}{{\mathfrak h}}
\newcommand{\fg}{{\mathfrak g}}
\newcommand{\fb}{{\mathfrak b}}
\newcommand{\eO}{\EuScript{O}}
\nc{\Q}{\mathfrak Q}
\newcommand{\bW}{{\mathbb{W}}}
\newcommand{\bc}{{\bf c}}
\begin{document}

\title[Reduced arc schemes for Veronese embeddings]
{Reduced arc schemes for Veronese embeddings and global Demazure modules}

\author{Ilya Dumanski}
\address{Ilya Dumanski:\newline
Department of Mathematics, HSE University, Russian Federation,
Usacheva str. 6, 119048, Moscow, \newline
{\it and }\newline
Independent University of Moscow, 119002, Bolshoy Vlasyevskiy Pereulok 11, Russia, Moscow.
}
\email{ilyadumnsk@gmail.com}

\author{Evgeny Feigin}
\address{Evgeny Feigin:\newline
Department of Mathematics, HSE University, Russian Federation,
Usacheva str. 6, 119048, Moscow,\newline
{\it and }\newline
Skolkovo Institute of Science and Technology, Skolkovo Innovation Center, Building 3,
Moscow 143026, Russia.
}
\email{evgfeig@gmail.com}

\keywords{Veronese embeddings, arc spaces, Demazure modules}

\begin{abstract}
We consider arc spaces for the compositions of Pl{\"u}cker and Veronese embeddings of the flag varieties for simple Lie groups  of types ADE. The  arc spaces 
are not reduced and we consider the homogeneous coordinate rings of the corresponding reduced schemes. We show that each graded component 
of a homogeneous coordinate ring is a cocyclic module over the current algebra and is acted upon by the algebra of symmetric 
polynomials. We show that the action of the polynomial algebra is free and that the fiber at the special point of a graded component 
is isomorphic to an affine Demazure module whose level is the degree of the Veronese embedding. 
In type A$_1$ (which corresponds to the Veronese curve) we give the precise list of generators of the reduced arc space. 
In general type, we introduce the notion of global higher level Demazure modules, which generalizes the standard notion of the global Weyl modules, and identify the graded components
of the homogeneous coordinate rings with these modules.   
\end{abstract}

\maketitle

\section*{Introduction}
Let $\fg$ be a simple simply-laced Lie algebra over $\bC$ and let $G$ be the corresponding simple simply connected Lie group.
The flag variety $G/B$ enjoys the Pl\"ucker embedding into the product of projectivized fundamental 
representations $\bP(V_{\om_i})$ (see e.g. \cite{Fu,Kum}). The corresponding arc space is obtained by replacing
the field of complex numbers with the ring of Taylor series $\bC[[t]]$. It was shown in \cite{FeMa1} that the 
natural scheme structure of the arc space of the flag variety is not reduced. The reduced scheme 
is identified with the (formal version of the) semi-infinite flag variety. The corresponding homogeneous coordinate ring
is isomorphic to the direct sum of dual global Weyl modules over $\g[t]$ \cite{BF1,Kato}. 

In this paper, we are interested in the arc spaces (see \cite{AK,AS,Fr,Mu1}) attached to the $l$-fold Veronese embedding of the 
flag varieties.
The problem turned out to be nontrivial even in type $A_1$, so let us first describe the $\msl_2$-case. 
Recall that the $\msl_2$ flag variety is isomorphic to $\bP^1$ and the corresponding semi-infinite flag variety is $\bP(V_\omega[[t]])$,
where $V_\omega$ is the two-dimensional fundamental representation of $\msl_2$. In order to study the degree $l$ version
let us first consider the finite-dimensional picture. Let 
$\imath_l:\bP^1\to \bP^l$ be the classical Veronese embedding of the complex projective line. The image of $\imath_l$
is cut out by the ideal generated by the quadratic relations $x_ax_b=x_{a-1}x_{b+1}$ for all $1\le a\le b\le l-1$, where 
$(x_0:\ldots:x_l)$ are standard homogeneous coordinates on $\bP^l$. To obtain the corresponding arc space one uses the 
infinite set of coordinates $x_a^{(k)}$, $k\ge 0$,  $0\le a\le l$ packed into the formal series $x_a(t)=\sum_{k\ge 0} x_a^{(k)}t^k$ to 
write down the explicit set of algebraic relations
\[
x_a(t)x_b(t)=x_{a-1}(t)x_{b+1}(t), \ 1\le a\le b\le l-1
\]    
(i.e. one takes all the coefficients of all the series above). It turns out that this scheme is not reduced for $l>2$.
We denote by $S_l$ the corresponding reduced scheme. 
Our first result is the following theorem
(see Theorem \ref{reduced structure of Veronese} for the precise formulation of the result).

\begin{thma}
The reduced scheme structure of $S_l$ is given by the quadratic ideal generated by all the coefficients 
of certain linear combinations of expressions of the form  $\frac{d^wx_a(t)}{dx^w}x_b(t)$. 
\end{thma}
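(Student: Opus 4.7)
The plan is to identify the defining ideal of $S_l$ as the kernel of the parametrization map $\phi^\ast : \bC[x_a^{(k)}] \to \bC[u^{(k)}, v^{(k)}]$ sending $x_a(t) \mapsto u(t)^{l-a} v(t)^a$. Since the target is an integral domain, $\ker\phi^\ast$ is automatically a prime ideal, so $S_l = \Spec \bC[x_a^{(k)}]/\ker\phi^\ast$ is reduced. The derivation $T = d/dt$ is compatible with $\phi^\ast$, hence $\ker\phi^\ast$ is $T$-invariant and bigraded by total $x$-degree and by $t$-weight, which matches the expected structure.

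To produce quadratic elements of $\ker\phi^\ast$, I would consider a general linear combination $\mathcal E(t) = \sum_{a,b,w} c_{a,b,w}\, \frac{d^w x_a(t)}{dt^w}\, x_b(t)$ of fixed bidegree and impose $\phi^\ast(\mathcal E(t)) = 0$ identically in $t$. Via the Leibniz rule, $\phi^\ast$ expands each summand into a sum of monomials $u^{(i_1)} \cd u^{(i_p)} v^{(j_1)} \cd v^{(j_q)}$ with $p + q = 2l$, producing a finite linear system in the $c_{a,b,w}$ whose solutions are exactly the desired quadratic combinations. This system has more solutions than the derivatives of the classical relations $x_a(t) x_b(t) - x_{a-1}(t) x_{b+1}(t)$ alone: for $l = 3$, a direct computation shows that $x_1'(t) x_2(t) - 2 x_1(t) x_2'(t) + x_0(t) x_3'(t)$ lies in $\ker\phi^\ast$, yet its $t^0$-coefficient $x_1^{(1)} x_2^{(0)} - 2 x_1^{(0)} x_2^{(1)} + x_0^{(0)} x_3^{(1)}$ is easily checked not to lie in the original arc ideal $I_l$ (by comparison against the finitely many generators $r_{0,2}^{(1)}, r_{0,3}^{(1)}, r_{1,3}^{(1)}$ in bidegree $(2,1)$). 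The coefficients of all such combinations generate a quadratic ideal $J_l \subseteq \ker\phi^\ast$.

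To prove $J_l = \ker\phi^\ast$, I would compare Hilbert series in the two gradings. The character of the image of $\phi^\ast$ is computable directly from the parametrization, since in bidegree $(n, k)$ the image is spanned by $t^k$-coefficients of degree-$nl$ monomials in $u(t), v(t)$. One then shows the quotient $\bC[x_a^{(k)}]/J_l$ has the same character in each bidegree, either by an explicit combinatorial spanning set constructed from the quadratic generators, or by invoking the identification of graded components with affine Demazure modules proved in later sections of the paper.

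The main technical obstacle will be the completeness step. The vanishing system in the second paragraph has many solutions, and extracting a clean combinatorial description of a minimal generating family -- and then proving that its $t$-coefficients actually generate all of $\ker\phi^\ast$ -- is where the argument becomes delicate as $l$ and the $t$-degree grow. It is precisely here that the affine Demazure module framework alluded to in the theorem statement should be decisive, supplying the character formula needed to close the dimension count and thereby confirming that no further quadratic relations (beyond those obtained from the $\frac{d^w x_a(t)}{dt^w} x_b(t)$ null vectors) are required.
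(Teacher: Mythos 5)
Your setup (the kernel of the parametrization $\nu_l$ is prime, hence defines the reduced structure; produce quadratic elements of the kernel of the form $\sum c_{a,b,w}\,\frac{d^w x_a(t)}{dt^w}x_b(t)$; then close the argument by a character comparison in the bigrading) matches the paper's strategy, and your $l=3$ example is a correct element of $\ker\nu_l$. The paper makes the first step explicit: the relations are
$Q_{s,r,w}=\sum_{u=s}^{r-1}(-1)^u\binom{r-s-1}{u-s}\frac{d^w x_u(t)}{dt^w}x_{r+s-u}(t)$ for $r-s\ge 2$, $0\le w\le r-s-2$, verified by a direct substitution $g=b/a$ and a vanishing binomial sum. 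Leaving the family implicit as ``all solutions of a linear system'' is acceptable as a definition of your ideal $J_l$, though it weakens the statement you can actually prove.

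The genuine gap is in the completeness step, and the escape route you propose does not work. Writing $J_l\subseteq\ker\nu_l$ only gives $\mathrm{ch}\,(\bC[x^{(i)}_a]/J_l)_n\ \ge\ \mathrm{ch}\,(\bC[x^{(i)}_a]/\ker\nu_l)_n=\mathrm{ch}\,\bD_{l,n\om}$; to conclude $J_l=\ker\nu_l$ you need the opposite inequality, i.e.\ an \emph{upper} bound on the quotient by the quadratic ideal obtained independently of the kernel. Invoking the identification of the graded components of $\bC[x^{(i)}_a]/\ker\nu_l$ with global Demazure modules (the later sections of the paper) is circular here: it computes the character of the quotient by the full kernel, which is exactly the lower bound you already have, and says nothing about $\bC[x^{(i)}_a]/J_l$. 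What is missing is the mechanism the paper supplies for the upper bound: a Gr\"obner-type degeneration with respect to the extra grading $\deg' x^{(i)}_j=j^2$, replacing each $Q_{s,r,w}$ by its leading term $\frac{d^w x_s(t)}{dt^w}x_r(t)$ (Lemma \ref{lemma on degrees} and Corollary \ref{corollary for lemma of degrees} give $\mathrm{ch}\,(S/I)_n\le\mathrm{ch}\,(S/I')_n$), followed by an exact computation of $\mathrm{ch}\,(S/I')_n$ via the dual functional realization: linear functionals on the multigraded component are encoded as polynomials in auxiliary variables $t_{j,v}$, the leading relations force divisibility by $\prod(t_{s,i}-t_{r,j})^{r-s-1}$, and the resulting character is the supernomial expression $\frac{1}{(q)_n}\sum_a\binom{\mathbf L}{a}_q e^{a\om}$, which coincides with $\mathrm{ch}\,\bD_{l,n\om}$ by the known Demazure character formula of \cite{FF}. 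Your first alternative (``an explicit combinatorial spanning set constructed from the quadratic generators'') points in this direction, but as stated it is the entire difficulty, not a proof; without it the dimension count cannot close.
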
     

The homogeneous coordinate ring of the reduced arc scheme is naturally $\bZ_{\ge 0}$ graded by the number of variables. 
Each graded component is a module over the current algebra $\msl_2[t]$. It is natural to ask about
the structure of these modules. Let us formulate our results for general $\fg$.

For a dominant integral weight $\la$ we denote by $D_{l,\la}$ a level $l$ affine Demazure module such that 
it is $\fg\T\bC[t]$-generated by the weight $l\la$ extremal vector. We construct
the global version $\bD_{l,\la}$ as the Cartan component in the tensor product of globalized fundamental 
Demazure modules $D_{l,\om_i}\T\bC[t]$ (see section \ref{GDm} for details). In particular, $\bD_{l,\la}$ are cyclic $\fg[t]$-modules
and $\bD_{1, \la}$ is isomorphic to the global Weyl module with highest weight $\lambda$.
Our second result is the following theorem, which generalises the well known $l = 1$ case.
\begin{thmb}
$\bD_{l,\la}$ is a free module over the algebra $\cA_\la$, isomorphic to the tensor product of several copies of symmetric polynomials. 
The action of $\cA_\la$ commutes with the action of $\fg[t]$ and the specialization at $0$ of $\bD_{l,\la}$ is isomorphic to
the Demazure module $D_{l,\la}$.
\end{thmb}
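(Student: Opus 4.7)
The plan is to adapt the classical argument for global Weyl modules (the $l=1$ case, due to Chari--Pressley and Chari--Fourier--Khandai), replacing the final dimension comparison by the Fourier--Littelmann theorem on fusion products of level $l$ Demazure modules. Writing $\la=\sum_i m_i\om_i$, the construction of $\bD_{l,\la}$ as the Cartan component in $\bigotimes_i(D_{l,\om_i}\T\bC[t])^{\T m_i}$ immediately provides commuting actions of $\fg[t]$ and of the polynomial algebra generated by the spectral parameters of the $\bC[t]$-factors. Since the cyclic generator of $\bD_{l,\la}$ is symmetric under permutations of equal factors $D_{l,\om_i}\T\bC[t]$, the subalgebra $\cA_\la=\bigotimes_i\bC[z_{i,1},\ldots,z_{i,m_i}]^{S_{m_i}}$ preserves $\bD_{l,\la}$ and commutes with $\fg[t]$. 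This handles the easy portions of the theorem: existence of the $\cA_\la$-action, its commutation with $\fg[t]$, and the structural description of $\cA_\la$.

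To identify $\bD_{l,\la}/\cA_\la^+\bD_{l,\la}$ with the local Demazure module $D_{l,\la}$, I would first construct a surjection $\bD_{l,\la}/\cA_\la^+\bD_{l,\la}\twoheadrightarrow D_{l,\la}$ by checking that the image of the cyclic generator satisfies the defining relations of $D_{l,\la}$ (extremal weight, $\fn^+[t]$-annihilation, the Joseph power-of-$f$ relations, and the level-$l$ relation), each inherited directly from the corresponding relation inside $\bigotimes_i(D_{l,\om_i}\T\bC[t])^{\T m_i}$. The opposite inequality of dimensions will be forced by the fiber count of the next step.

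The main work is freeness. Each graded weight space $\bD_{l,\la}[\mu]$ is a finitely generated module over the polynomial algebra $\cA_\la$, so it suffices to prove constant fiber dimension. I would compare the fiber at $0\in\Spec\cA_\la$, which surjects onto $D_{l,\la}[\mu]$ by the previous step, with the fiber at a closed point whose spectral parameters $z_{i,j}$ are pairwise distinct. At such a point, the localization is a cyclic $\fg[t]$-submodule of a tensor product of evaluation Demazure modules $D_{l,\om_i}$ at distinct points, and its graded character coincides with that of the Feigin--Loktev fusion product of these factors. In ADE type, the Fourier--Littelmann theorem identifies this fusion product with $D_{l,\la}$, so the generic fiber has dimension $\dim D_{l,\la}[\mu]$. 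Constancy of fiber dimension over a polynomial ring yields freeness, and simultaneously forces the surjection of the previous paragraph to be an isomorphism.

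The principal obstacle is the generic-fiber identification: one must verify that distinct-point localization of $\bD_{l,\la}$ introduces no relations beyond those already present in the tensor product of evaluation modules, so that its character really equals the fusion-product character. This combines a careful tracking of the defining relations through the localization with the nontrivial Fourier--Littelmann fusion-product theorem; once it is in hand, the rest of the proof is routine commutative algebra applied weight-space by weight-space.
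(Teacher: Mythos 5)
Your architecture matches the paper's (commuting $\cA(\la)$-action coming from the $\bC[t]$-factors, generic fiber identified with a tensor product of evaluation Demazure modules plus the Chari--Venkatesh/Fourier--Littelmann fusion theorem, comparison of the fiber at $0$ with $D_{l,\la}$, then freeness from constancy of fiber dimensions), but the crucial step is asserted rather than proved, and the assertion is false as stated. The relations that actually cut out $D_{l,\la}$ from the Weyl module, namely $f_\al\T t^{(\la,\al)}$ and $(f_\al\T t^{(\la,\al)-1})^{l+1}$ annihilating the cyclic vector, are \emph{not} inherited from the ambient module $\bigotimes_i (D_{l,\om_i}\T\bC[t])^{\T m_i}$: already for $\msl_2$ and $\la=2\om$ one has $ft^{2}.(w\T w)\neq 0$ there. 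These elements kill the cyclic vector only modulo $\cA(\la)^{+}\bD_{l,\la}$, and establishing that containment is the technical heart of the paper's proof of Theorem B: a reduction to $\msl_2$ along root $\msl_2$-triples (using the lemma on Cartan components of quotients), a realization of $V_{l\om}[t]^{\odot n}$ inside $\bC[u_1,\dots,u_n,z_1,\dots,z_n]/(u_1^{l+1},\dots,u_n^{l+1})$, and an explicit Vandermonde-cofactor/symmetric-function computation showing $ft^{n}$ and $(ft^{n-1})^{l+1}$ lie in the ideal generated by constant-free symmetric polynomials in the $z_i$. Your "inherited directly" offers no substitute for this. Note also the orientation: verifying these relations in the fiber at $0$ yields a surjection $D_{l,\la}\twoheadrightarrow \bD_{l,\la}\T_{\cA(\la)}\bC_0$, i.e.\ the upper bound on the special fiber; the surjection you wrote goes the other way and would give the same inequality as your generic-fiber/semicontinuity count, so the two bounds would not close.

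There is a second, smaller gap in the freeness step. You control fiber dimensions only at $0$ and at generic (pairwise distinct) parameters, but projectivity via Nakayama requires equal fiber dimension at \emph{every} closed point, including points with partially coinciding nonzero parameters, which upper semicontinuity alone does not bound from above. The paper closes this using the grading: the fiber dimension at $\bc$ equals that at $a\bc$ for $a\in\bC^\times$, and since $0\in\overline{\{a\bc\}}$, semicontinuity gives that the fiber at any $\bc$ is dominated by the fiber at $0$, which equals the generic value; constancy follows, and freeness then comes from Quillen--Suslin because $\cA(\la)\simeq \bC[z_1,\dots,z_{|\la|}]^{S_{m_1}\times\cdots\times S_{m_r}}$ is a polynomial ring. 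Finally, the generic-fiber identification you flag as the principal obstacle is handled in the paper not by tracking relations but by Grothendieck's generic freeness lemma: on a nonempty Zariski-open set the fiber of the Cartan component injects into the fiber of the full tensor product, which at distinct parameters is the cyclic module $\bigotimes_{i,j} D_{l,\om_i}(c_{i,j})$; an open set of good points is all the argument needs.
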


Recall (see \cite{FiMi}) that the (formal version of the) semi-infinite flag variety 
attached to $G$ is embedded into the product $\prod_{i=1}^{{\rm rk}\fg}\bP(V_{\om_i}[[t]])$, where $V_{\om_i}$ are fundamental 
representations. We fix degree $l\in\bZ_{>0}$ as above. We consider the embedding of the semi-infinite  flag variety into the product
$\prod_{i=1}^{{\rm rk}\fg}\bP(V_{l\om_i}[[t]])$ and the induced reduced scheme structure on the corresponding arc space 
(let us stress once again that the naive scheme structure is not reduced). The homogeneous coordinate ring is graded by
the cone of dominant integral weights. 
Let us denote by $R_{l,\la}$ the graded component attached to the weight $\la$.
Here is the third main theorem of the paper.

\begin{thmc}
One has an isomorphism of $\fg[t]$-modules: $R_{l,\la}\simeq \bD^*_{l,\la}$.
\end{thmc}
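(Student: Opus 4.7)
The plan is to reduce the statement to the case of a single fundamental weight and then use the defining Cartan-component property of $\bD_{l,\la}$, mirroring the $l=1$ strategy of \cite{BF1,Kato}. In the fundamental case $\la=\om_i$, the multidegree-$\om_i$ component is linear in the $i$-th factor and trivial in the others, so no Plücker-type relations can appear and $R_{l,\om_i}\simeq(V_{l\om_i}\otimes\bC[t])^*$ as a $\fg[t]$-module. On the other side $\bD_{l,\om_i}=D_{l,\om_i}\otimes\bC[t]$ by definition, and the level-$l$ Demazure module $D_{l,\om_i}$ is isomorphic as a $\fg$-module to $V_{l\om_i}$ (a standard property of fundamental Demazure modules in simply-laced type), which yields $R_{l,\om_i}\simeq\bD_{l,\om_i}^*$.

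For general $\la=\sum a_i\om_i$ I would use multiplication in the homogeneous coordinate ring, which gives a $\fg[t]$-equivariant surjection $\bigotimes_i R_{l,\om_i}^{\otimes a_i}\twoheadrightarrow R_{l,\la}$. Dualizing and inserting the fundamental identification produces an inclusion
$$R_{l,\la}^*\hookrightarrow\bigotimes_i(D_{l,\om_i}\otimes\bC[t])^{\otimes a_i}.$$
The image is a $\fg[t]$-submodule containing the weight $l\la$ extremal vector (namely the product of the fundamental extremal vectors), so it contains the $\fg[t]$-cyclic submodule generated by this vector, which is $\bD_{l,\la}$ by definition. One direction $\bD_{l,\la}\subseteq R_{l,\la}^*$ follows.

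For the reverse inclusion I would compare characters over the polynomial algebra $\cA_\la$. The arc structure equips $R_{l,\la}^*$ with an $\cA_\la$-action whose specialization at the origin is the degree-$\la$ component of the homogeneous coordinate ring of the Veronese image of the finite flag $G/B$, which by Borel--Weil for Demazure modules is $D_{l,\la}^*$. Theorem B says $\bD_{l,\la}$ is $\cA_\la$-free with fiber $D_{l,\la}$ at the origin, so matching characters on the special fiber forces the inclusion above to be an equality. The main obstacle is this last step: one must establish $\cA_\la$-flatness of $R_{l,\la}^*$ and control its central fiber, i.e.\ one must verify that the quadratic relations already visible from the Veronese embedding suffice to cut out the reduced arc scheme at the generic point. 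In type $A_1$ this is essentially Theorem A, while for general simply-laced $\fg$ one would combine the freeness of Theorem B with an identification of the central fiber through classical finite-dimensional Demazure theory.
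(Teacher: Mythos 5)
Your first inclusion (dualizing the multiplication map and observing that the image is a $\fg[t]$-submodule containing the extremal vector, hence contains the Cartan component) is sound and parallels half of the paper's argument. But the proposal has two genuine gaps. First, the base case is wrong outside type A: it is \emph{not} true in simply-laced type that $D_{l,\om_i}\cong V_{l\om_i}$ as $\fg$-modules; fundamental affine Demazure modules (already the local Weyl modules $D_{1,\om_i}=W_{\om_i}$) are reducible as $\fg$-modules and strictly larger than $V_{l\om_i}$ in types D and E. Correspondingly, the statement actually proved in the paper concerns the embedding $\fQ\hookrightarrow\prod_i\bP(\overline{\bD}_{l,\om_i})$, which coincides with $\prod_i\bP(V_{l\om_i}[[t]])$ only in type A; with your identification $R_{l,\om_i}\simeq(V_{l\om_i}\T\bC[t])^*$ the fundamental case of the theorem would already fail in types D, E.

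Second, the reverse inclusion is the real content, and your argument for it does not work. The degree-$\la$ component of the coordinate ring of the Veronese image of the finite flag variety $G/B$ is $V_{l\la}^*$ by Borel--Weil, not $D_{l,\la}^*$: the affine Demazure module $D_{l,\la}$ is much larger (e.g.\ $\dim D_{l,m\om}=(l+1)^m$ for $\msl_2$, versus $\dim V_{lm\om}=lm+1$) and does not arise from finite-dimensional Borel--Weil on $G/B$; nor is it clear that any natural $\cA_\la$-structure on $R_{l,\la}^*$ has that finite ring as its special fiber. The flatness of $R_{l,\la}^*$ over $\cA_\la$ and the identification of its fiber at the origin -- which you acknowledge as the main obstacle -- are precisely what is missing, and appealing to Theorem A in the $A_1$ case is circular with respect to the paper's logic, since the paper proves Theorem A \emph{using} Theorem C (via the character of $\bD_{l,n\om}$). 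The paper's own proof sidesteps all of this with one external input: $H^0(\fQ,\mathcal{L}(\la))\simeq\bW^*_\la$ (Braverman--Finkelberg, Kato). This gives an embedding $R_{l,\la}\hookrightarrow\bW^*_{l\la}$, hence $R_{l,\la}$ is cocyclic with socle the (one-dimensional) top weight line; dualizing the presentation of $R_{l,\la}$ as a quotient of $\bigotimes_i(\bD_{l,\om_i}^{\T m_i})^*$ then shows $R_{l,\la}^*$ is the cyclic submodule of $\bigotimes_i\bD_{l,\om_i}^{\T m_i}$ generated by the unique degree-zero vector of weight $l\la$, i.e.\ exactly the Cartan component $\bD_{l,\la}$, settling both inclusions at once. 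If you want a route avoiding that input, you must supply an independent proof of the upper bound on $R_{l,\la}$, which your specialization sketch does not provide.
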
 

Our paper is organized as follows. In Section \ref{Gen} we recall all necessary definitions and fix the notation. 
In Section \ref{Gm} we introduce the general notion of the global module and describe the structure of its specializations over the highest 
weight algebra. 
In Section \ref{GDm} we introduce the level $l$ global Demazure modules as a particular case of global modules and prove the analog of 
the level one theorem (the global Weyl modules case). 
Finally, in Section \ref{AS} we study the homogeneous coordinate rings of embeddings of semi-infinite flag variety and describe explicitly 
the reduced scheme structure of semi-infinite Veronese curve.

\section*{Acknowledgments}
The authors would like to thank Syu Kato, Sergey Loktev, Ievgen Makedonskyi, and Alexander Petrov for useful discussions.
We are also grateful to the Mathematisches Forschungsinstitut Oberwolfach for the perfect working conditions during
the workshop "Degeneration Techniques in Representation Theory", where an important part of the project was carried out. 
The authors are deeply indebted to Huanhuan Yu, who noticed some mistakes in the early version of this text.
We also would like to thank R. Venkatesh, who focused our attention on \cite{CV}.

The work was partially supported by the grant RSF-DFG 19-11-00056.
The first author is supported in part by the M\"obius Contest Foundation for Young Scientists.

\section{Generalities}\label{Gen}

\subsection{Simple Lie algebras}

Let $\mathfrak{g} $ be a finite-dimensional complex simple Lie algebra with a Cartan decomposition 
$\mathfrak{g} = \mathfrak{n}_+ \oplus \mathfrak{h} \oplus \mathfrak{n}_-$. Let $R=R_+\sqcup R_-$ be the decomposition
of the roots of $\fg$ into the union of positive and negative roots. Let $\omega_1, \hdots , \omega_r$ be the fundamental weights.
We denote by $P = \bigoplus_{i = 1}^r \mathbb{Z} \omega_i$ and $P_+ = \bigoplus_{i = 1}^r \mathbb{Z}_{\geq 0} \omega_i$ the weight lattice and its dominant cone.
For an integral dominant weight $\la = m_1\om_1 + \hdots + m_r\om_r$ we denote $|\la| = m_1 + \hdots + m_r$.
For a positive root $\alpha$ we denote the corresponding Chevalley generators by $e_\alpha, h_\alpha, f_\alpha$.
Let $(\cdot, \cdot)$ be the invariant bilinear form, normalized so that the norm of the long root is 2.
By $V_\lambda$ and $v_\lambda$ we denote the irreducible representation of the highest weight $\lambda$ and its highest weight vector correspondingly.
Let $\mathfrak{b} = \mathfrak{n}_+ \oplus \mathfrak{h}$ be a Borel subalgebra and $G \supset B$ be the simply connected Lie groups of $\mathfrak{g}$ and $\mathfrak{b}$.

\subsection{Cyclic modules and fusion product}

We study the representation theory of the current algebra $\g \T \bC[t] = \g[t]$. A $\g[t]$-module $W$ is called cyclic if it is generated 
by a single vector $w$ as a $\U(\g[t])$-module.

Throughout the paper, when we talk about a cyclic module, we assume that some cyclic vector is fixed. All the morphisms between cyclic modules are
assumed to map the cyclic vector to the cyclic vector.

Suppose we have two cyclic modules $W_1$ and $W_2$ with cyclic vectors $w_1$ and $w_2$. We denote by $W_1 \odot W_2$ the 
Cartan component in their tensor product:
\[
W_1 \odot W_2 = \U(\g[t]). (w_1 \T w_2) \subset W_1 \T W_2.
\]

Let us prove one simple Lemma (note that a similar but a bit weaker result was proven in \cite[Proposition 1.2]{FKL}).
\begin{lem} \label{cyclic product of quotients is quotient}
Suppose $W_1 \twoheadrightarrow W_1'$, $W_2 \twoheadrightarrow W_2'$ are surjective morphisms of cyclic modules. Then $W_1' \odot W_2'$ is a quotient of $W_1 \odot W_2$.
\end{lem}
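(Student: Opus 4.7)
The plan is to use the tensor product of the two given surjections and observe that it restricts to the desired map between Cartan components.

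First, I would denote the surjections by $\pi_i : W_i \twoheadrightarrow W_i'$ for $i=1,2$, which by assumption satisfy $\pi_i(w_i) = w_i'$. Taking their tensor product over $\mathbb{C}$ gives a map
\[
\pi_1 \otimes \pi_2 : W_1 \otimes W_2 \longrightarrow W_1' \otimes W_2',
\]
which is a morphism of $\mathfrak{g}[t]$-modules (with the standard coproduct action) and is surjective, since the tensor product of two surjective linear maps is surjective. Moreover, it sends the vector $w_1 \otimes w_2$ to $w_1' \otimes w_2'$.

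Next, I would exploit functoriality of $U(\mathfrak{g}[t])$-action. Because $\pi_1 \otimes \pi_2$ is a $U(\mathfrak{g}[t])$-module homomorphism, for every $u \in U(\mathfrak{g}[t])$ we have
\[
(\pi_1 \otimes \pi_2)\bigl(u\cdot(w_1 \otimes w_2)\bigr) = u\cdot(w_1' \otimes w_2').
\]
Hence $\pi_1 \otimes \pi_2$ maps the submodule $U(\mathfrak{g}[t]).(w_1 \otimes w_2) = W_1 \odot W_2$ onto the submodule $U(\mathfrak{g}[t]).(w_1'\otimes w_2') = W_1' \odot W_2'$. Restricting the map therefore produces the desired surjection
\[
W_1 \odot W_2 \twoheadrightarrow W_1' \odot W_2',
\]
sending the cyclic vector to the cyclic vector, which is exactly the statement that $W_1' \odot W_2'$ is a quotient of $W_1 \odot W_2$ in the category of cyclic modules with fixed cyclic vector.

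There is essentially no serious obstacle here: the argument is the standard fact that restriction of a surjective module homomorphism to a submodule generated by an element mapped to a chosen generator of the image remains surjective. The only thing to be mindful of is that cyclic generation is with respect to $U(\mathfrak{g}[t])$ acting via the coproduct on tensor products, which is precisely what makes $\pi_1 \otimes \pi_2$ equivariant.
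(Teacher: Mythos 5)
Your argument is correct and is essentially the paper's own proof: both identify $W_1' \odot W_2'$ as the image of $W_1 \odot W_2$ under the induced surjection $\pi_1 \otimes \pi_2 : W_1 \otimes W_2 \twoheadrightarrow W_1' \otimes W_2'$, using $\U(\g[t])$-equivariance and the fact that the cyclic vector maps to the cyclic vector. You simply spell out the one-line argument in more detail; no discrepancy to report.
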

\begin{proof}
$W_1' \odot W_2'$ is precisely the image of $W_1 \odot W_2$ under the induced map $W_1 \otimes W_2 \twoheadrightarrow W_1' \otimes W_2'$.
\end{proof}

For a cyclic $\g[t]$-module $W$ and $c \in \bC$ we denote by $W(c)$ the cyclic $\g[t]$-module which 
is isomorphic to $W$ as a vector space, and $xt^i$ acts as $x(t + c)^i$ on it.

Any cyclic $\g[t]$-module $W$ has a filtration, induced from the $t$-grading on $U(\g[t])$. 
By $\mathrm{gr } (W)$ we denote the associated graded module with respect to the induced filtration.
We say that $W$ is graded if the induced  
filtration comes from a grading.  
For a graded module $W$ we set $\ch_q W = \sum_{i \geq 0} q^i \ch W_i$, where $\ch W_i$ is the character of the $\fg$-module $W_i$.
In what follows we sometimes omit the subscript $q$ and write simply $\ch W$ instead of $\ch_q W$.

Suppose $W_1,  \hdots , W_n$ are finite-dimensional graded cyclic $\g[t]$-modules with cyclic vectors $w_1, \hdots, w_n$ and 
$c_1, \hdots, c_n$ are pairwise distinct complex numbers. Then the module $W_1(c_1) \T \hdots \T W_n(c_n)$ is known to be cyclic 
with a cyclic vector $w_1 \T \hdots \T w_n$. The associated 
graded of this module is called the fusion product (see \cite{FeLo}):
\[
W_1 \ast \hdots \ast W_n (c_1, \hdots, c_n) = \mathrm{gr} ( W_1(c_1) \T \hdots \T W_n(c_n) ).
\]
It was conjectured in \cite{FeLo} that it does not depend on the choice of constants and is associative.
The conjecture was proved for certain families of modules in \cite{FL2,CL,CV,Naoi}.

\subsection{Weyl modules}

\begin{definition}
	Let $\lambda$ be a dominant integral weight of $\mathfrak{g}$.
	The local Weyl module $W_{\lambda}$ is a cyclic $\mathfrak{g}[t]$-module, generated by a vector $w_\lambda$ with the relations:
	\begin{align*}
	&\mathfrak{n}_+ \otimes \mathbb{C}[t] . w_\lambda = 0; \\
	&(h \otimes 1) . w_\lambda = \lambda(h) w_\lambda, \text{ for all } h \in \mathfrak{h}; \\
	&\mathfrak{h} \otimes t \mathbb{C}[t] . w_\lambda = 0; \\
	&(f_\alpha \otimes 1)^{(\lambda, \alpha^{\vee}) + 1} . w_\lambda = 0, \text{ for all } \alpha \in R_+.
	\end{align*}
\end{definition}

\begin{definition}
	Let $\lambda$ be a dominant integral weight of $\mathfrak{g}$.
	The global Weyl module $\mathbb{W}_{\lambda}$ is a cyclic $\mathfrak{g}[t]$-module, generated by a vector $w_\lambda$ with the relations:
	\begin{align*}
	&\mathfrak{n}_+ \otimes \mathbb{C}[t] . w_\lambda = 0; \\
	&(h \otimes 1) . w_\lambda = \lambda(h) w_\lambda, \text{ for all } h \in \mathfrak{h}; \\
	&(f_\alpha \otimes 1)^{(\lambda, \alpha^{\vee}) + 1} . w_\lambda = 0, \text{ for all } \alpha \in R_+.
	\end{align*}
\end{definition}

It was proven in \cite[Corollary 3.5]{Kato} that for any $\la, \mu$ one has
\begin{equation} \label{cyclic product of global Weyls}
\bW_\la \odot \bW_\mu = \bW_{\la + \mu}.
\end{equation}

The following theorem (\cite{CI}, \cite{FL2}, \cite{Naoi}) gives a deep connection between global and local Weyl modules.
Let $\la = m_1\omega_1 + \hdots + m_r \omega_r$ and let $\cA(\la)=\bC[z_1, \hdots , z_{|\la|}]^{S_{m_1} \times \hdots \times S_{m_r}}$.  
We denote by $\bC_\bc$  the quotient of $\cA(\la)$ by the maximal ideal, corresponding to the point $\bc$.

\begin{thm} \label{main theorem for global weyl}
The following holds:
	\begin{enumerate}
		\item $\bW_\la$ admits the action of $\cA(\la)$ commuting with the $\g[t]$-action; the action of $\cA(\la)$ is induced by
		the $\U(\h[t])$-action on the highest weight vector of $\bW_\la$;
		\item 
		$\bW_\la \T_{\cA(\la)} \bC_0 \simeq W_\la$;
		\item for a point $\bc = (c_{i,j})$, $i=1,\dots,r$, $j=1,\dots,m_i$ with pairwise distinct coordinates one has
		\[
		\bW_\la \T_{\cA(\la)} \bC_\bc \simeq \bigotimes_{i = 1}^r \bigotimes_{j = 1}^{m_i} W_{\omega_i}(c_{i,j});
		\]
		\item $\bW_\la$ is free as an $\cA(\la)$-module.
	\end{enumerate}
\end{thm}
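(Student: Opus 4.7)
The plan is to establish parts (i)--(iv) in order, with the freeness assertion (iv) following from a dimension count that I expect to be the main obstacle, since it rests on the deep dimension formula for local Weyl modules.

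For part (i), I would first analyze the action of $\U(\fh[t]) = S(\fh[t])$ on $w_\la$. Since $\fn_+[t]w_\la = 0$ and $\fh$ acts on $w_\la$ by $\la$, the orbit $\U(\fh[t])w_\la$ lies entirely in the $\la$-weight space of $\bW_\la$. Writing $h_{i,k} = h_{\al_i}\T t^k$, one applies Garland-type identities that arise by expanding $(e_{\al_i}\T 1)(f_{\al_i}\T t)^{m_i+k}(f_{\al_i}\T 1)^{N}w_\la$ and using the nilpotency relation $(f_{\al_i}\T 1)^{m_i+1}w_\la = 0$; after symmetrization, these identities show that the image of $\U(\fh[t])$ acting on $w_\la$ is exactly $\cA(\la)$, viewed as symmetric functions in auxiliary variables $z_{i,1},\dots,z_{i,m_i}$ with $h_{i,k}$ mapped to the $k$-th power-sum. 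To extend this action to all of $\bW_\la$, I would verify that for $a \in \U(\fh[t])$ the vector $a w_\la$ again satisfies the defining relations of $\bW_\la$ (the $\fn_+$-annihilation, the $\fh$-weight and the nilpotency), hence the assignment $x w_\la \mapsto x(a w_\la)$ is a well-defined endomorphism commuting with $\fg[t]$.

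For part (iii), I would construct a $\fg[t]$-map $\bW_\la \to \bigotimes_{i,j} W_{\om_i}(c_{i,j})$ sending $w_\la \mapsto \bigotimes v_{\om_i}$. The only nontrivial defining relation to verify on the target is the nilpotency, which is standard for tensor products of evaluation modules at pairwise distinct parameters. The image is the $\fg[t]$-submodule generated by the highest-weight tensor, which for generic $c_{i,j}$ exhausts the whole tensor product by a Vandermonde-style argument. On $\bigotimes v_{\om_i}$, each $h_{i,k}$ acts by the $k$-th power-sum in $c_{i,1},\dots,c_{i,m_i}$, which agrees with its image in the residue field $\bC_\bc$; hence the surjection factors through $\bW_\la \T_{\cA(\la)} \bC_\bc$. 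Isomorphism then follows by matching $\fg$-characters of the two sides, using the upper bound coming from $\bW_\la$ being a quotient and the lower bound from the known dimension of the tensor product.

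For part (ii), the augmentation ideal of $\cA(\la)$ corresponds under the map of (i) to $\fh\T t\bC[t]$, so $\bW_\la \T_{\cA(\la)} \bC_0$ satisfies the extra relation $(\fh\T t\bC[t]) \cdot w_\la = 0$. Together with the remaining Weyl relations this gives a surjection $W_\la \twoheadrightarrow \bW_\la \T_{\cA(\la)}\bC_0$; the reverse surjection is the universal property of $W_\la$ applied to the cyclic quotient. Finally, for part (iv), I would invoke the dimension identity
\[
\dim W_\la = \prod_{i=1}^r (\dim W_{\om_i})^{m_i},
\]
established across \cite{CI,FL2,Naoi}. Each weight space $\bW_\la[\mu]$ is a finitely generated $\cA(\la)$-module whose fiber at $0$ has dimension $\dim W_\la[\mu]$ by (ii), and whose generic fiber has the same dimension by (iii). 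Since $\cA(\la)$ is a polynomial ring, torsion-free modules with equidimensional fibers of dimension equal to the generic rank are free; checking torsion-freeness via the action of $\fg[t]$ (or directly by comparing global characters) then yields freeness of every weight space, and hence of $\bW_\la$. This last equality of special and generic fiber dimensions is exactly the hard input, and is what makes (iv) the principal obstacle.
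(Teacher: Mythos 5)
The paper itself does not prove Theorem \ref{main theorem for global weyl}: it is quoted from \cite{CI}, \cite{FL2}, \cite{Naoi} (the statement goes back to Chari--Pressley and Chari--Fourier--Khandai \cite{CP,CFK}). So the relevant comparison is with the standard literature argument, and your outline reproduces its architecture: (i) well-definedness of the $\cA(\la)$-action via the universal property of $\bW_\la$ (the endomorphism $w_\la\mapsto a w_\la$) together with Garland identities identifying $\cA(\la)$ with symmetric polynomials; (ii) by matching defining relations; (iii) via the surjection onto a tensor product of evaluated fundamental local Weyl modules; (iv) by a fibre-dimension count with the identity $\dim W_\la=\prod_i(\dim W_{\om_i})^{m_i}$ from \cite{CI,FL2,Naoi} as the deep external input. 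This is also precisely the strategy the paper abstracts to general global modules in Section \ref{Gm} (Theorem \ref{specialisation at generic point}, Proposition \ref{global surjects to fusion}, Corollary \ref{global is projective}), so your parts (ii) and (i) are fine as roadmaps (for (i) note that exactness of the identification of $\cA(\la)$ with the invariant ring also needs a lower bound, i.e. that no further relations hold, which is supplied by your evaluation modules from (iii)).

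Two steps are stated too loosely to count as proofs. In (iii), the ``upper bound coming from $\bW_\la$ being a quotient'' is not an argument: the fibre $\bW_\la\T_{\cA(\la)}\bC_\bc$ at $\bc\neq 0$ is not a quotient of $W_\la$; the bound $\dim(\bW_\la\T_{\cA(\la)}\bC_\bc)\le\dim W_\la$ comes from semicontinuity combined with the $\bC^\times$-scaling argument (as in Corollary \ref{global is projective}), and only then does the dimension identity close the count --- so (iii) and (iv) must be run together, or (iii) must instead use the localization argument of \cite{CFK} showing directly that any fibre at a point with pairwise distinct coordinates is a quotient of $\bigotimes W_{\om_i}(c_{i,j})$. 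Also the theorem asserts (iii) for \emph{all} pairwise distinct $\bc$, while your cyclicity and character-matching steps are phrased only for generic $\bc$. In (iv), ``torsion-free with fibres of dimension equal to the generic rank implies free'' is false without either gradedness or constancy at \emph{all} closed points (the ideal $(x,y-1)\subset\bC[x,y]$ is torsion-free with fibre at the origin of dimension one, equal to its generic rank, yet not free), and you have only verified the fibre dimensions at $0$ and at generic points. The fix is either the scaling argument giving constancy everywhere, hence projectivity and then freeness by Quillen--Suslin as in the paper, or a graded Nakayama argument: lift a basis of the fibre at $0$ of each weight space to homogeneous generators and note that the kernel of the resulting map from a free module is torsion inside a torsion-free module, hence zero. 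Finally, the finite generation of each weight space of $\bW_\la$ over $\cA(\la)$, which you use silently, is itself a nontrivial (though standard, Garland-identity based) input.
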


\subsection{Affine Demazure modules}
We refer to \cite{Kum} for details on the affine Lie algebras and Demazure mdoules.
Let $\gh$ be the affine Kac-Moody Lie algebra, i.e. $\gh=\fg\T\bC[t,t^{-1}]\oplus\bC K\oplus \bC d$, where 
$K$ is central and $d$ is the degree counting element $[d,x\T t^k]=-kx\T t^k$, $x\in\fg$. 
Let $\fb_{af}=\fb\oplus\fg\T t\bC[t]$ be the Iwahori subalgebra. 
The operator $K$ acts on an irreducible $\gh$ module $L$ by a constant, called the level of $L$.
Let $P_+^l$ be the set of level $l$ integrable weights of $\gh$ and for $\Lambda\in P_+^l$ let $L(\Lambda)$ be
the corresponding integrable irreducible highest weight representation. In particular, for each element $w$ form
the affine Weyl group there exists (unique up to a scalar) extremal weight vector $v_{w\Lambda}\in L(\Lambda)$
of weight $w\Lambda$. The Demazure module $D_w(\Lambda)\subset L(\Lambda)$ is defined as the $\U(\fb_{af})$ span of $v_{w\Lambda}$.

Recall (see e.g. \cite{CL,FL1}) that for any $\la\in P_+$ there exists 
a unique level one affine Demazure module $D_\la$, such that $D_\la$ is invariant with respect to 
the whole current algebra $\fg\T\bC[t]$ (containing $\fb_{af})$ and $D_\la$ is $\g[t]$-generated by the extremal weight vector 
of finite weight $\la$.           
\begin{example}
If $\fg=\msl_2$, $\la=m\omega$, then $\dim D_{\la}=2^m$. 
\end{example}

In what follows we will need the following higher level generalization.

\begin{dfn}
Let $l\ge 1$, $\la\in P_+$. Then $D_{l,\la}$ is $\fg[t]$-invariant level $l$ affine Demazure module, generated by the extremal weight vector
of $\fh$-weight $l\la$.   
\end{dfn}

\begin{rem}
One can explicitly construct the modules $D_{l,\la}$ as follows. Let us consider the level one modules $D_\la\subset L(\Lambda)$ and let
$v_\la\in D_\la$ the weight $\la$ cyclic vector. The module $L(l\Lambda)$ can be realized as the Cartan component in the 
$l$-th tensor power of $L(\Lambda)$. In particular, this Cartan component contains the $l$-th tensor power of the vector $v_\la$.
Then $D_{l,\la}=\U(\fg[t]).v_\la^{\T l}\subset D_\la^{\T l}$.
\end{rem}

\begin{example}
If $\fg=\msl_2$, $\la=m\omega$, then $\dim D_{l,\la}=(l+1)^m$. 
\end{example}

Since the $\U(\fg[t])$-modules $D_{l,\la}$ are cyclic, it is natural to ask for the description in terms of generators and relations.
The general formulae are given in \cite{J,Ma,Polo}; the specific formulae in the affine case are written down in \cite{FL2}.
In our case, the relations were simplified in \cite{CV}. Namely
it was shown in \cite[Lemma 3.3 + Theorem 2]{CV} that in the ADE case the module $D_{l,\la}$ is the 
quotient  of the local Weyl module $W_{l\la}$ by the ideal generated by the elements 
\begin{equation}\label{relations}
f_\al\T t^{(\la,\al)}\quad \text{ for all } \al\in R_+.
\end{equation}

\begin{rem}
The authors of \cite{CV} consider a larger family of modules. In particular, our $\la$ corresponds to $l\la$ in notation of \cite{CV}.
\end{rem}

\subsection{Arc spaces}
We refer to \cite{Mu1,AK,Nash,I2} for the details about the arc spaces.
Let $X$ be a projective algebraic variety embedded into the projective space $\bP^N=\bP(V)$ with homogeneous coordinates
$(x_i)_{i=0}^N$ (here $V$ is an $(N+1)$-dimensional vector space). Let $I$ be the homogeneous ideal defining $X$ inside $\bP^N$. 
Let us replace each variable $x_i$ with the
formal series $x_i(t)=\sum_{m\ge 0} x_i^{(m)}t^m$. More precisely, we introduce infinitely many variables $x_i^{(m)}$,
$i=0,\dots,N$, $m\ge 0$ and for any $f\in I$ consider the expansion 
\[
f(x_0(t),\dots,x_N(t))=\sum_{s\ge 0} f_st^s,
\] 
where all $f_s$ are polynomials in variables $x_i^{(m)}$.

For a vector space $V$ let $V[t]=V\T \bC[t]$ and let $V[[t]]=V\T \bC[[t]]$, where $\bC[[t]]$ is the ring of Taylor series. 
\begin{dfn}
We denote by $I^\infty$ the ideal inside the polynomial ring in variables $x_i^{(m)}$, $i=0,\dots,N$, $m\ge 0$
formed by all the polynomials $f_s$ for all $f\in I$, $s\ge 0$. The corresponding arc space of $X$ is the subscheme inside
$\bP(V[[t]])$ defined by the ideal $I^\infty$.
\end{dfn}

\begin{rem}
There is another way to define an arc space corresponding to a projective variety (see e.g. \cite{Fr}). 
Namely, given a projective scheme
$X$ one first covers it with affine schemes. Each affine scheme in the covering defines the corresponding affine arc space
in a way parallel to the definition above. Finally, one has to glue the resulting affine arc spaces using the gluing rules
coming from $X$. We note that such a definition is different from the one we use even in the case of $X=\bP^1$
(our definition produces $\bP(\bC^2[[t]])$, while the affine covering procedure produces the variety glued  from two
components ${\rm Spec}(\bC[x_i]_{i\ge 0})$ and ${\rm Spec}(\bC[y_i]_{i\ge 0})$ via the rule $x(t)y(t)=1$, 
$x(t)=\sum_{i\ge 0} x_it^i$, $y(t)=\sum_{i\ge 0} y_it^i$).  
\end{rem}

\begin{rem}
We could have started with the multi-projective embedding $X\hookrightarrow \prod_{j=1}^r \bP(V_j)$ 
defined by a multi-homogeneous ideal (here  $V_j$ are some vector spaces). Then obvious modification of the 
definition above allows to define the corresponding arc space. 
\end{rem}

\subsection{Semi-infinite flags}
Let us consider the subscheme $\fQ$ inside the product $\prod_{i=1}^r \bP(V_{\om_i}[[t]])$ defined as the collections 
of lines satisfying Pl\"ucker-Drinfeld conditions (see \cite{FiMi,BF1,Kato}). We assume that $\fQ$ is equipped with the
reduced scheme structure. We note that thanks to the results of \cite{BF1, Kato} the Pl\"ucker-Drinfeld embedding of $\fQ$ is not projectively normal in general. In particular, the spaces of sections of natural (fundamental) line bundles $\eO_{\om_i}$ on $\fQ$  are isomorphic
to the (dual of) global Weyl modules $\bW_{\om_i}^*$, which are in general larger than $V^*_{\om_i}[t]$. 
To fix this problem, let us consider the completed Weyl modules $\overline{\bW}_\la$, where the completion is performed with respect to
the $t$-grading (in particular, in type A one has $V_{\om_i}[[t]]\simeq\overline{\bW}_{\om_i}$).  
It is proved in \cite{Kato}
that the homogeneous coordinate ring of $\fQ$ with respect to the natural embedding 
$\fQ\subset \prod_{i=1}^r \bP(\overline{\bW}_{\om_i})$ is isomorphic to the direct sum $\bigoplus_{\la\in P_+} \bW_\la^*$.

\section{Global modules}\label{Gm}

\subsection{Global version of a current algebra module}
Let $W$ be a cyclic graded finite-dimensional $\g[t]$-module with a cyclic vector $w$ of nonzero dominant integral weight $\la$ such that
\begin{equation} \label{th[t].v = 0 for single module}
t\fh[t].w = 0.
\end{equation}
Following \cite[subsection 1.3]{FeMa2}, we construct the $\g[t]$-module $W[t]$ as the vector space $W \otimes  \bC[t]$ 
with the following $\fg[t]$-action:
\begin{equation} \label{g-action on W[t]}
xt^m.v \otimes t^k=\sum_{j=0}^m\binom{m}{j}(xt^j v)\otimes t^{m+k-j} \mathrm{,~for~} m, k\in \bZ_{\ge 0}, x \in \fg, v \in W.
\end{equation}

It was shown in \cite[subsection 1.3]{FeMa2} that this is indeed an action and $W[t]$ is cyclic module with the cyclic 
vector $w\otimes 1$.

Recall also that for local fundamental Weyl module one has $W_{\omega_i}[t] \simeq \mathbb{W}_{\omega_i}$.

Let us consider the action of  $\U(\h[t]) = S(\h[t])$ on $W[t]$ defined by
\begin{equation} \label{right cartan action on W[t]}
ht^m.u(w\T 1) = u(ht^m.w\T 1) \mathrm{,~for~} u \in \U(\g[t]), h \in \h.
\end{equation}

We define $\cA(W) = \U(\h[t])/\mathrm{Ann}_{\U(\h[t])}w$.

\begin{prop} \label{A-action on W[t]}
	The following holds:
	\begin{enumerate}
		\item[i)] The action \eqref{right cartan action on W[t]} is well-defined.
	\end{enumerate}

	\begin{enumerate}
		\item[ii)] Commutative algebra $\cA(W)$ is isomorphic to the algebra of polynomials in one variable $\bC[z]$.
		\item[iii)] $W[t] \otimes_{\cA(W)} \bC_c \simeq W(c)$ for any $c \in \bC$.
	\end{enumerate}
\end{prop}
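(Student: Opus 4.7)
The proof rests on one simple observation: on $W\T\bC[t]$ the operator $T$ of right multiplication by $t$ on the second tensor factor, $T(v\T t^k)=v\T t^{k+1}$, commutes with the $\g[t]$-action \eqref{g-action on W[t]}. Indeed, both $T\circ(xt^m)$ and $(xt^m)\circ T$ send $v\T t^k$ to $\sum_j\binom{m}{j}(xt^j v)\T t^{m+k+1-j}$, so each $T^m$ is a $\g[t]$-equivariant endomorphism of $W[t]$. Combined with the relation $t\h[t].w=0$ in $W$ and \eqref{th[t].v = 0 for single module}, formula \eqref{g-action on W[t]} gives
\[
ht^m.(w\T 1)=\sum_{j=0}^m\binom{m}{j}(ht^j w)\T t^{m-j}=\la(h)\,w\T t^m=\la(h)\,T^m(w\T 1),
\]
since only the $j=0$ term survives. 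Consequently \eqref{right cartan action on W[t]} reads
\[
ht^m. u(w\T 1)=u\bigl(\la(h)\,T^m(w\T 1)\bigr)=\la(h)\,T^m\bigl(u(w\T 1)\bigr),
\]
after pushing the $\g[t]$-equivariant $T^m$ past $u\in\U(\g[t])$. The right-hand side depends only on $u(w\T 1)$, not on the lift $u$, which proves (i) and shows that $ht^m$ acts on $W[t]$ as the $\g[t]$-equivariant operator $\la(h)\,T^m$.

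For (ii), the image of $\U(\h[t])$ in $\mathrm{End}_{\g[t]}(W[t])$ is the commutative subalgebra generated by $\{\la(h)T^m : h\in\h,\,m\ge 0\}$. Since $\la\ne 0$, pick $h_0\in\h$ with $\la(h_0)=1$; then $h_0t^m\mapsto T^m$, so the image equals $\bC[T]$. The vectors $T^m(w\T 1)=w\T t^m$ are linearly independent in $W[t]$, so $T$ is algebraically free, and therefore $\cA(W)=\U(\h[t])/\mathrm{Ann}_{\U(\h[t])}(w\T 1)\simeq\bC[T]\simeq\bC[z]$, with $z$ corresponding to $T$. For (iii), under this identification the $\cA(W)$-action is right multiplication by $\bC[t]$, so
\[
W[t]\T_{\cA(W)}\bC_c=W[t]/(T-c)W[t]=W\T\bigl(\bC[t]/(t-c)\bigr)\simeq W
\]
as vector spaces via $v\T t^k\mapsto c^k v$. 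Specializing \eqref{g-action on W[t]} to this quotient yields
\[
xt^m.v=\sum_{j=0}^m\binom{m}{j}c^{m-j}(xt^j).v=\bigl(x(t+c)^m\bigr).v,
\]
which is exactly the $\g[t]$-module structure on $W(c)$.

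The only non-routine step is the opening observation that the "left" action \eqref{g-action on W[t]} commutes with right multiplication by $t$; once this is in hand, the rest is bookkeeping. The hypothesis that $\la$ is nonzero is essential, as it ensures the existence of $h_0$ with $\la(h_0)=1$ and hence that $\cA(W)$ is large enough to see $\bC[z]$ rather than collapsing to $\bC$.
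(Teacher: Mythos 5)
Your proof is correct and follows essentially the same route as the paper: well-definedness via the fact that $ht^m$ moves the cyclic vector $w\T 1$ to $\la(h)\,w\T t^m$ together with the $\g[t]$-equivariance of the shift $v\T t^k\mapsto v\T t^{k+1}$, the identification $\cA(W)\simeq\bC[z]$ via $ht^m\mapsto\la(h)z^m$, and the specialization map $v\T t^k\mapsto c^kv$ checked against \eqref{g-action on W[t]}. The only difference is that you spell out the commuting shift operator $T$ explicitly, which the paper leaves implicit here (and makes explicit as the operator $\rho$ in the proof of the next proposition).
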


\begin{proof}

In order to check $\mathrm{i)}$ one needs to show that $u(w\T 1) = 0$ implies $u(ht^m.w\T 1) = 0$. 
This is obvious since \eqref{th[t].v = 0 for single module} implies that $ht^m.v\T t^k = \la(h) v \T t^{k + m}$ 
for any $v \in W$. 

Formulas \eqref{th[t].v = 0 for single module} and \eqref{right cartan action on W[t]} imply that the map $\U(\h[t]) \rightarrow \bC[z]$, 
defined by $ht^m \mapsto \la(h)z^m$ gives an isomorphism $\cA(W) \simeq \bC[z]$.


The isomorphism $\mathrm{iii)}$ $W[t] \T_{\cA(W)} \bC_c \rightarrow W(c)$ is defined by $v\T t^k \mapsto c^k v$. By \eqref{g-action on W[t]} we have that the $\g[t]$ action on $W[t] \T_{\cA(W)} \bC_c$ is defined by
\[
xt^m.v \otimes 1=\sum_{j=0}^m\binom{m}{j} c^{m - j} (xt^j v)\otimes 1 = (x(t + c)^m v )\T 1,
\]
so $\mathrm{iii)}$ is indeed an isomorphism of $\g[t]$-modules.
\end{proof}

\begin{rem}
Proposition \ref{A-action on W[t]} implies that specialisations of $W[t]$ as $\cA(W)$-module at all points have equal dimensions. 
It follows by the Nakayama  lemma 
that $W[t]$ is projective and it is free, as any graded projective module is free.
\end{rem}

\subsection {Global modules and their specialisations}
\begin{definition}
For finite-dimensional cyclic graded modules $W_1, \hdots , W_n$ with cyclic vectors $w_1, \hdots , w_n$ of nonzero integral dominant weights $\la_1, \hdots , \la_n$ 
such that
\begin{equation} \label{t h[t] = 0}
t\h[t].w_i = 0 \mathrm{~for~any~} i,
\end{equation}
we define the global module of $W_1, \hdots , W_n$ as
\[
R(W_1, \hdots , W_n) = W_1[t] \odot \hdots \odot W_n[t].
\]
\end{definition}

\begin{example} \label{global of locals fundamentals weyls}
 Suppose every $W_i$ is a local fundamental Weyl module $W_{\om_{k_i}}$. Then $W_{\om_{k_i}}[t] \simeq \bW_{\om_{k_i}}$ 
and \eqref{cyclic product of global Weyls} implies that \[R(W_{\om_{k_1}}, \hdots , W_{\om_{k_n}}) = \bW_{\om_{k_1} + \hdots + \om_{k_n}}.\]
\end{example}

Further, we try to generalize the well-known theory of global Weyl modules to an arbitrary global module.

Denote the cyclic vector of $R(W_1, \hdots , W_n)$ by $\T_i w_i$.
Let us equip the module $R(W_1, \hdots , W_n)$ with a structure of right $\h[t]$-module by setting
\begin{equation} \label{Cartan action on R}
ht^m.u(\T_i w_i) = u(ht^m. \T_i w_i) \mathrm{,~for~} u \in \U(\g[t]), h \in \h.
\end{equation}
\begin{prop}
The action \eqref{Cartan action on R} is well-defined.
\end{prop}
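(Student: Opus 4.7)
The plan is to mirror part (i) of Proposition \ref{A-action on W[t]}: realise the left action of $ht^m$ on the cyclic vector $\T_j (w_j \T 1)$ as the value of a $\g[t]$-equivariant endomorphism of the ambient tensor product $\bigotimes_j W_j[t]$ at that vector. Once this is done, well-definedness on $R(W_1, \ldots, W_n) = \U(\g[t]). \T_j w_j$ is automatic. This is really the multi-factor analogue of the one-line argument given in the proof of Proposition \ref{A-action on W[t]}, so I do not anticipate any genuine obstacle.

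First I would introduce, for each $i$ and each $m \ge 0$, the ``shift by $t^m$ in the $i$-th slot'' operator $\sigma_i^m$ on $\bigotimes_j W_j[t]$, which sends $\T_j(v_j \T t^{k_j})$ to the same tensor with $k_i$ replaced by $k_i + m$. A direct index shift in formula \eqref{g-action on W[t]} shows that multiplication of the polynomial variable by $t^m$ on $W_i[t]$ commutes with the $\g[t]$-action there; hence $\sigma_i^m$ commutes with the diagonal $\g[t]$-action on the entire tensor product. Using the hypothesis \eqref{t h[t] = 0}, formula \eqref{g-action on W[t]} collapses on each cyclic vector to $ht^m.(w_i \T 1) = \la_i(h)(w_i \T t^m)$, and then applying the Leibniz rule for the diagonal action yields
\[
ht^m.\T_j(w_j \T 1) \;=\; \sum_{i=1}^n \la_i(h)\, \sigma_i^m\bigl(\T_j(w_j \T 1)\bigr) \;=\; \Phi_{h,m}\bigl(\T_j(w_j \T 1)\bigr),
\]
where $\Phi_{h,m} := \sum_i \la_i(h)\, \sigma_i^m$ is, by the previous sentence, a $\g[t]$-equivariant endomorphism of $\bigotimes_j W_j[t]$.

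The verification of \eqref{Cartan action on R} is then formal: for any $u \in \U(\g[t])$,
\[
u.(ht^m.\T_j w_j) \;=\; u.\Phi_{h,m}(\T_j w_j) \;=\; \Phi_{h,m}(u.\T_j w_j),
\]
so $u.\T_j w_j = 0$ forces $u.(ht^m.\T_j w_j) = 0$, and the prescription \eqref{Cartan action on R} unambiguously defines an operator on $R(W_1, \ldots, W_n)$. Note that $R$ is stable under $\Phi_{h,m}$ since the latter is $\g[t]$-equivariant and maps the cyclic vector into $R$. The resulting operators pairwise commute because the $\sigma_i^m$ do and the $\la_i(h)$ are scalars, so the assignment $ht^m \mapsto \Phi_{h,m}|_R$ is linear in $h$ and multiplicative, extending to an algebra homomorphism $\U(\h[t]) \to \mathrm{End}_{\g[t]}\bigl(R(W_1, \ldots, W_n)\bigr)$; this is exactly the desired right $\h[t]$-module structure. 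The only place any computation is needed is the $\g[t]$-equivariance of $\sigma_i^m$, which is essentially the same index shift already used in the one-factor case.
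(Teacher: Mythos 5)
Your construction of $\Phi_{h,m}=\sum_i \la_i(h)\,\sigma_i^m$ is exactly the operator $\rho(ht^m)$ used in the paper's proof (the slot-wise shift scaled by $\la_i(h)$ and extended by the Leibniz rule), and you verify the same three points: $\g[t]$-equivariance, agreement with the $\U(\h[t])$-action on the cyclic vector via \eqref{t h[t] = 0}, and the formal implication $u.\T_j w_j=0\Rightarrow u.(ht^m.\T_j w_j)=0$. So the proposal is correct and follows essentially the same route as the paper.
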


\begin{proof}
We consider the representation $\rho$ of algebra $\U(\h[t]) = S(\h[t])$ on $R(W_1, \hdots , W_n)$, defined on $W_i[t]$ by
\[
\rho(h t^m).(v \otimes t^k) = \la(h) v \otimes t^{m + k} \mathrm{,~for~} m, k\in \bZ_{\ge 0}, h \in \h, v \in W_i
\]
and extended to $R(W_1, \hdots , W_n)$ by Leibniz rule.
It is clear that this action coincides with usual action of $S(\h[t]) = \U(\h[t]) \subset \U(\g[t])$ 
on the cyclic vector $\T_i w_i$ (thanks to \eqref{t h[t] = 0}) and that it commutes with the action of $\U(\g[t])$.

In order to prove the proposition one needs to show that $u(\T_i w_i) = 0$ for $u\in\U(\fg[t])$ implies $u(ht^m.\T_i w_i) = 0$. 
Indeed, in this case we have
\[
u(ht^m.\T_i w_i) = u(\rho(ht^m). \T_i w_i) = \rho(ht^m).u(\T_i w_i) = 0.
\]
\end{proof}

We define $\cA(W_1, \hdots , W_n) = \U(\h[t])/\mathrm{Ann}_{\U(\h[t])} (\T_i w_i)$. It is clear that this algebra is embedded into $\bigotimes_{i = 1}^n \cA(W_i) = \bC[z_1, \hdots , z_n]$ and is generated by
\[
ht^m = \la_1(h)z_1^m + \hdots + \la_n(h)z_n^m \in \bC[z_1,\hdots z_n] 
\]
for all $h\in \h, m \in \bZ_{\ge 0}$.

Note that this algebra depends only on $\la_1, \hdots, \la_n$ (not on $W_1, \hdots, W_n$ themselves), so sometimes we will denote it by $\cA(\la_1, \hdots, \la_n)$.

\begin{rem} \label{linear independent weights}
Suppose the highest weights of $W_i$'s are $\mu_1$ ($n_1$ times), ... , $\mu_r$ ($n_r$ times), where $n_i$ are some 
nonnegative integers, and $\mu_i$ are linearly independent. 
Take a basis $h_1, \hdots , h_r$ of $\h$ such that $\mu_i(h_j) = \delta_{ij}$. Then $\cA(W_1, \hdots , W_n)$ is generated by
\[
z_{n_1 + \hdots + n_{i - 1} + 1}^m + \hdots + z_{n_1 + \hdots + n_i}^m \mathrm{~for~all~} 1 \leq i \leq r, m \in \bZ_{\ge 0}
\]
and hence is isomorphic to $\bC[z_1, \hdots , z_n]^{S_{n_1} \times \hdots \times S_{n_r}}$, where $n=\sum_{i=1}^r n_i$.

For example, that is the case when all the highest weights are fundamental (e. g. for global Weyl module, see Example \ref{global of locals fundamentals weyls}).
\end{rem}

Consider $h' \in \h$ such that for any $\{i_1, \hdots, i_s \} \subset \{1, \hdots, n\}$ 
$$(\la_{i_1} + \hdots + \la_{i_s})(h') \neq 0.$$ Denote by $\cA_{h'}$ the subalgebra of $\bC[z_1, \hdots , z_n]$, generated by the elements
\[
\la_1(h')z_1^m + \hdots + \la_n(h')z_n^m,\quad m \in \bZ_{\ge 0}.
\]

Such algebras appeared in \cite{BCES}, \cite{EGL}, \cite{SV}, \cite{KMSV}. 
Sometimes they are referred to as the algebras of deformed Newton sums.

We have the extensions of algebras:
\[
\cA_{h'} \subset \cA(W_1, \hdots , W_n) \subset \bigotimes_{i = 1}^n \cA(W_i).
\]


\begin{prop} \label{fact about A}
The following holds:
\begin{enumerate}
	\item[i)] $\bigotimes_{i = 1}^n \cA(W_i)$ is finite over $\cA_{h'}$ and hence over $\cA(W_1, \hdots , W_n)$.
	\item[ii)] $\cA_{h'}$ and $\cA(W_1, \hdots , W_n)$ are finitely generated noetherian domains.
\end{enumerate}
\end{prop}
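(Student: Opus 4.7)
The plan is to prove (i) by establishing the stronger statement that $\bC[z_1,\ldots,z_n]$ is already finite as a module over the smaller subalgebra $\bC[p_1,\ldots,p_n] \subseteq \cA_{h'}$, where I write $p_m := \la_1(h')z_1^m + \cdots + \la_n(h')z_n^m$ and $c_i := \la_i(h')$. Each $c_i$ is nonzero by applying the subset-sum hypothesis to the singleton $\{i\}$. Both rings are positively graded with $\deg z_i = 1$ and $\deg p_m = m$, sitting inside the standard grading of $\bC[z_1,\ldots,z_n]$. Using the graded Nakayama lemma, the finiteness of this ring extension reduces to showing that the quotient $\bC[z_1,\ldots,z_n]/(p_1,\ldots,p_n)$ is finite-dimensional over $\bC$, equivalently, that $V(p_1,\ldots,p_n) = \{0\}$ set-theoretically in $\bA^n$.

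To verify this scheme-theoretic statement I would package the $p_m$ into the generating function
\[
f(T) \;=\; \sum_{m\ge 0} p_m T^m \;=\; \sum_{i=1}^n \frac{c_i}{1 - z_i T}.
\]
Given a point $a = (a_1,\ldots,a_n)$ with $p_1(a) = \cdots = p_n(a) = 0$, the difference $f(T)|_{z=a} - p_0(a)$ is, on the one hand, $O(T^{n+1})$ as a formal power series, and on the other hand a rational function whose numerator $\sum_i c_i \prod_{j\ne i}(1 - a_j T) - p_0(a)\prod_i(1 - a_i T)$ has degree at most $n$. These two facts force the numerator to vanish identically, so $f(T)|_{z=a}$ is the constant $p_0(a)$ as a rational function. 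Any putative pole at $T = 1/a$ for a nonzero value $a$ occurring among the $a_i$ then contributes the residue $-a^{-1}\sum_{i:\, a_i = a} c_i$, which must therefore vanish. This violates the subset-sum hypothesis unless the index set $\{i : a_i = a\}$ is empty; consequently no $a_i$ is nonzero, so $a = 0$. The final assertion ``and hence over $\cA(W_1,\ldots,W_n)$'' is immediate from the inclusion $\cA_{h'} \subseteq \cA(W_1,\ldots,W_n) \subseteq \bC[z_1,\ldots,z_n]$.

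For (ii), both algebras are subrings of the integral domain $\bC[z_1,\ldots,z_n]$ and are therefore themselves domains. For finite generation as $\bC$-algebras I would apply the Artin--Tate lemma to the tower $\bC \subseteq \cA_{h'} \subseteq \bC[z_1,\ldots,z_n]$: the base $\bC$ is Noetherian, the top ring is a finitely generated $\bC$-algebra, and by (i) it is a finite $\cA_{h'}$-module; the conclusion is that $\cA_{h'}$ is a finitely generated $\bC$-algebra. The same argument applies verbatim to $\cA(W_1,\ldots,W_n)$.

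The crucial step is the residue analysis in the middle paragraph: the hypothesis that \emph{every} nonempty subset sum of the $\la_i(h')$ is nonzero is used precisely to rule out configurations where several $a_i$ coincide at a common nonzero value in a way that cancels the corresponding residue. Weaker genericity (for instance, only $c_i \ne 0$, or only $c_i + c_j \ne 0$) would not suffice once three or more indices are allowed to coincide, which is why the full subset-sum condition is essential.
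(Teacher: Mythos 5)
Your argument is correct. For part (ii) it coincides with the paper's proof: both algebras are subrings of the domain $\bC[z_1,\hdots,z_n]$, and finite generation follows from (i) via the Artin--Tate lemma. For part (i) you take a genuinely different route: the paper simply cites \cite[Proposition 2.6]{BCES} for finiteness of $\bC[z_1,\hdots,z_n]$ over $\cA_{h'}$, whereas you give a self-contained Noether-normalization-style proof. Your reduction via graded Nakayama to the statement that $V(p_1,\hdots,p_n)=\{0\}$ is sound (the $p_m$ are homogeneous of positive degree, and finite-dimensionality of the quotient is equivalent, by the Nullstellensatz, to the vanishing locus being the origin), and the generating-function step is airtight: if $p_1(a)=\cdots=p_n(a)=0$ then $\sum_i c_i/(1-a_iT)-p_0(a)$ is simultaneously $O(T^{n+1})$ and a rational function whose numerator has degree at most $n$ over a denominator with constant term $1$, forcing the numerator to vanish; the residue at $T=1/a$ for a nonzero coordinate value $a$ is $-a^{-1}\sum_{i:\,a_i=a}c_i$, which the subset-sum hypothesis on $h'$ forbids from vanishing, so $a=0$. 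In fact you prove slightly more than the paper states, namely finiteness already over the subalgebra generated by $p_1,\hdots,p_n$, which buys a reference-free proof at the cost of a page of computation; the paper's citation buys brevity. Your closing remark that the full subset-sum condition (not just pairwise or individual nonvanishing) is what rules out cancellation among coinciding coordinates is also accurate and matches the role the hypothesis plays in \cite{BCES}.
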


\begin{proof}
	
	i) is proved in \cite[Proposition 2.6]{BCES}. 
	
	
	
	ii) is an immediate consequence from i) by the Artin-Tate lemma (\cite[Proposition 7.8]{AM}).  
\end{proof}

From the finite extension $\cA(\la_1, \hdots, \la_n) \subset \bigotimes_{i = 1}^n \cA(W_i) = \bC[z_1, \hdots, z_n]$ 
we get that the closed points are mapped to the closed points under the map $\bA_{\bC}^n \rightarrow \Spec \cA(\la_1, \hdots, \la_n)$. 
In what follows we will identify $\bc \in \bC^n$ with its image in $\Spec \cA(\la_1, \hdots, \la_n)$. 
For example, for $\bc \in \bC^n$ we denote by $\bC_\bc$ the quotient of $\cA(\la_1, \hdots, \la_n)$ 
by the maximal ideal, corresponding to $\bc$.




From now on we focus on the study of global modules of the kind 
\[
R(\underbrace{W_1, \hdots, W_1}_{n_1}, \hdots, \underbrace{W_s, \hdots, W_s}_{n_s})
\]
such that $W_i$ has the highest weight $\mu_i$ and $\mu_1, \hdots, \mu_s$ are pairwise different  (we assume $n_1 + \hdots + n_s = n$).


Denote $\cA = \cA(\underbrace{\mu_1, \hdots, \mu_1}_{n_1}, \hdots, \underbrace{\mu_s, \hdots, \mu_s}_{n_s})$ the highest weight algebra 
of the global module $R = R(\underbrace{W_1, \hdots, W_1}_{n_1}, \hdots, \underbrace{W_s, \hdots, W_s}_{n_s})$,
$\mu_i\ne\mu_j$ for $i\ne j$. 
The next Theorem tells what is a fiber of the global module over the highest weight algebra at a generic point.

\begin{thm} \label{specialisation at generic point}
One has
\[
R \T_{\cA} \bC_\bc = \bigotimes_{i = 1}^s \bigotimes_{u = 1}^{n_i} W_i(c^{(n_i)}_u)
\]
for $\bc$ in some nonempty Zariski-open subset of $\bC^n$ (here $c^{(n_i)} \in \bC^{n_i}$ stands for coordinates from 
$(n_{1} + \hdots + n_{i - 1} + 1)$ to $n_1+\hdots + n_i$ of $\bc$, so $\bc = (c^{(n_1)}, \hdots, c^{(n_s)}) \in \bC^n$).
\end{thm}

Note that Theorem \ref{specialisation at generic point} is well-known for global Weyl modules 
(see Theorem \ref{main theorem for global weyl} and \cite{CI}, \cite{FL2}, \cite{Naoi}).
Our proof goes like reversed Kato's proof of injectiveness of the map $\bW_{\la + \mu} \hookrightarrow \bW_\la \T \bW_\mu$ in \cite{Kato}.

\begin{proof}
Denote $A = \bigotimes_{i = 1}^s \cA(W_i)^{\otimes n_i}$ (recall properties of the embedding $\cA \subset A$ from Proposition \ref{fact about A}).
$\bigotimes_{i = 1}^s W_i[t]^{\otimes n_i}$ is a finitely generated $A$-module because each $W_i[t]$ is a finitely generated $\cA(W_i)$-module. By Proposition \ref{fact about A} it implies that $\bigotimes_{i = 1}^s W_i[t]^{\otimes n_i}$ 
and hence its submodule $R = \bigodot_{i = 1}^s W_i[t]^{\odot n_i}$ are finitely generated $\cA$-modules. 
Applying Lemma \ref{generic freeness} to the inclusion of $\cA$-modules
\[
R \hookrightarrow \bigotimes_{i = 1}^s W_i[t]^{\otimes n_i},
\]
we obtain that for $\bc$ in some nonempty Zariski-open subset of $\bC^n$ one has an injection:
\[
R \otimes_\cA \bC_\bc \hookrightarrow \bigl( \bigotimes_{i = 1}^s W_i[t]^{\otimes n_i} \bigr) \T_{\cA} \bC_\bc.
\]
We intersect this open set with the set of $\bc$ which have pairwise distinct coordinates (which is also open). For such $\bc$ we can use Lemma \ref{quotient ring lem} and rewrite the target-module as: 
\begin{multline*}
\bigl( \bigotimes_{i = 1}^s W_i[t]^{\otimes n_i} \bigr) \T_{\cA} \bC_\bc \simeq \bigl( \bigotimes_{i = 1}^s W_i[t]^{\otimes n_i} \bigr) \T_A \Bigl( A/(P - P(\bc), P \in \cA) \Bigr) \\
\simeq \bigl( \bigotimes_{i = 1}^s W_i[t]^{\otimes n_i} \bigr) \T_A \Bigl( \bigotimes_{i = 1}^s \bigoplus_{\sigma \in S_{n_i}} \bC_{\sigma c^{(n_i)}} \Bigr) \simeq \bigotimes_{i = 1}^s \Bigl( \bigotimes_{u = 1}^{n_i} W_i(c^{(n_i)}_u) \Bigr)^{\oplus n_i!}.
\end{multline*}


Hence, we obtain the desired isomorphism.
\end{proof}


Now the main object of our interest is the fiber at the point $0$.


\begin{definition}
We call the fiber at $0$ of a global module $R \T_\cA \bC_0$ the \textbf{local product}.
\end{definition}

\begin{prop} \label{global surjects to fusion}
There is a natural surjection from local product to fusion product 
\[
R \T_\cA \bC_0 \twoheadrightarrow W_1^{\ast n_1} \ast \hdots \ast W_s^{\ast n_s} (c_1, \hdots , c_n)
\]
for a point $\bc = (c_1, \hdots, c_n)$ in some Zariski-open subset of $\bC^n$.
\end{prop}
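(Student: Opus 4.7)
The plan is to construct the desired surjection
\[
\phi: R(W_1, \ldots, W_n) \T_\cA \bC_0 \twoheadrightarrow W_1 \ast \cdots \ast W_n(c_1,\ldots,c_n)
\]
directly by sending the cyclic vector $\T_i w_i \T 1$ of the local product to the cyclic vector $\T_i w_i$ of the fusion product, and to verify that this assignment extends to a well-defined $\g[t]$-equivariant map. Surjectivity will then be immediate from the cyclicity of the fusion product, so the content is in the well-definedness.

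The key observation is that both modules carry compatible $\bZ_{\ge 0}$-gradings. The module $R = R(W_1, \ldots, W_n) \subset \bigotimes_i W_i[t]$ is graded by placing $W_i$ in degree zero and $t$ in degree one in each factor $W_i[t] = W_i \T \bC[t]$; an inspection of \eqref{g-action on W[t]} shows that $\g[t]$ raises the degree by the $t$-degree, and the subalgebra $\cA \subset \U(\h[t])$ is positively graded. Since $\bC_0$ is concentrated in degree zero, the local product $R \T_\cA \bC_0$ inherits a grading with cyclic vector in degree zero. The fusion product is graded by definition.

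The main step will be to show: for any homogeneous $u \in \U(\g[t])$ of $t$-degree $k$ with $u(\T_i w_i \T 1) = 0$ in the local product, one has $u(\T_i w_i) = 0$ in the degree-$k$ component of the fusion product. The hypothesis translates to $u(\T_i w_i) \in \cA^+ \cdot R$, where $\cA^+$ is the augmentation ideal of the graded algebra $\cA$. Writing $u(\T_i w_i) = \sum_j a_j r_j$ with $a_j \in \cA^+$ and $r_j \in R$ both homogeneous, a degree balance forces $\deg r_j < k$, hence $r_j \in F_{k-1} R := \U(\g[t])_{\le k-1}(\T_i w_i)$. Next I fix $\bc$ inside the Zariski-open subset from Theorem \ref{specialisation at generic point}; under the identification $R \T_\cA \bC_\bc \simeq \bigotimes_i W_i(c_i)$, which sends cyclic vectors to cyclic vectors and under which $\cA$ acts on the fiber by evaluation at $\bc$, the equation descends to $u(\T_i w_i) = \sum_j a_j(\bc) \bar r_j$ in $\bigotimes_i W_i(c_i)$, with each $\bar r_j$ lying in $\tilde F_{k-1} := \U(\g[t])_{\le k-1}(\T_i w_i)$, the $(k-1)$-st step of the filtration defining the fusion product. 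Consequently $u(\T_i w_i) \in \tilde F_{k-1}$, and so vanishes in $\tilde F_k / \tilde F_{k-1}$, which is the degree-$k$ component of $W_1 \ast \cdots \ast W_n(c_1,\ldots,c_n)$.

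The main obstacle I anticipate is making sure that the identification of Theorem \ref{specialisation at generic point} is simultaneously compatible with the cyclic vectors and with the $\cA$-action on the fiber being evaluation at $\bc$; this must be extracted from the explicit construction of the isomorphism $\nu$ in that proof, which is built so as to send $\T_i w_i \T 1$ to $\T_i w_i(c_i) \T 1$ and to be $\g[t]$-equivariant, while the $\cA$-evaluation statement follows because any $a \in \cA$ acts on $R \T_\cA \bC_\bc$ as multiplication by the scalar $a(\bc)$ by construction.
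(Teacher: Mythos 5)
Your argument is correct and is essentially the paper's own proof in different clothing: both define the map by sending cyclic vector to cyclic vector and verify the inclusion of annihilators of these vectors by passing to a generic fiber via Theorem \ref{specialisation at generic point} and using that the fusion product is the associated graded of $W_1(c_1)\T\cdots\T W_n(c_n)$ (the paper phrases your degree-balance step as the inclusion $I_1(0)\subset \mathrm{hc}(I_1(\bc))$ of left ideals in $\U(\g[t])$, which it declares clear; your bookkeeping with $\cA^+$ makes that step explicit). One small correction: the grading on $R(W_1,\ldots,W_n)$ must be the total one, combining the internal grading of the (by assumption graded) modules $W_i$ with the $t$-degree of $\bC[t]$ --- if the $W_i$ are placed in degree zero, formula \eqref{g-action on W[t]} is not homogeneous --- and with this fix, together with the standing convention that the cyclic vectors $w_i$ sit in degree zero, every step of your argument goes through unchanged.
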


\begin{proof}
For $P \in \U(\g[t])$ we denote by $\mathrm{hc}(P)$ the highest homogeneous component of $P$ with respect to $t$-grading.

The module $R$ is a cyclic $\g[t]$-modules and hence can be presented as $\U(\g[t])/I_R$, where $I_R$ is a left ideal of $\U(\g[t])$.

By the PBW-theorem all the elements of $\U(\g[t])$ have the form
\begin{equation} \label{PBW basis}
\sum_{Q_1, Q_2, Q_3} Q_1(f_\alpha t^i) Q_2(e_\alpha t^i) Q_3(h_\alpha t^i),
\end{equation}
where $Q_1, Q_2, Q_3$ are some polynomials. Then for any $\bc$
\[
R \T_\cA \bC_\bc \simeq \U(\g[t]) / (I_R(\bc)),
\]
where $I_R(\bc) = \{ P\vert_{ht^i = \la_1(h)c_1^i + \hdots + \la_n(h)c_n^i} : P \in I_R \}$, ($P$ is of the form \eqref{PBW basis}).

Note also that for arbitrary left ideal $I \subset U(\g[t])$ we have $\mathrm{gr}(\U(\g[t]) / I) = \U(\g[t]) / \mathrm{hc}(I)$, 
where $\mathrm{hc}(I) = \{ \mathrm{hc}(P) : P \in I \}$.

It is clear that for any $\bc$ the inclusion  $I_R(0) \subset \mathrm{hc}(I_R(\bc))$ holds, and hence for $\bc$ being in Zariski-open 
set from Theorem \ref{specialisation at generic point} we have:
\begin{multline*}
R \T_\cA \bC_0 \twoheadrightarrow \mathrm{gr} (R \T_\cA \bC_\bc) \simeq \mathrm{gr} (\bigotimes_{i = 1}^s \bigotimes_{u = 1}^{n_i} W_i(c^{(n_i)}_u)) \simeq \\
W_1^{\ast n_1} \ast \hdots \ast W_n^{\ast n_s} (c_1, \hdots , c_n).
\end{multline*}
\end{proof}

\begin{rem}
It is not true that this surjection is always an isomorphism (as it was conjectured in earlier version of this preprint). The simplest counterexample is the module $R(V_\om, V_{2\om})$ for $\g = \msl_2$.
\end{rem}

However, for some global modules the local product is in fact isomorphic to the fusion product. 
Below we prove this property for $W$'s being local Weyl modules below; the proof for $W$'s being affine Demazure modules 
of weights $l\om_i$ in types ADE is given in the next section. The corresponding global modules have many nice properties:



\begin{lem} \label{global is projective}
Suppose the surjection from Proposition \ref{global surjects to fusion} is an isomorphism, i.e.
\begin{equation} \label{global is isomorphic to fusion}
R \T_\cA \bC_0 \cong W_1^{\ast n_1} \ast \hdots \ast W_s^{\ast n_s} (c_1, \hdots , c_n).
\end{equation}
Then $R$ is a free $\cA$-module.
\end{lem}
\begin{proof}
Any graded projective module is free. We prove that $R$ is projective. By the Nakayama Lemma, it suffices to show that specializations at all points have equal dimensions.

From Theorem \ref{specialisation at generic point} we know that fiber at generic point has dimension $\prod_{i=1}^s (\dim W_i)^{n_s}$. 
By the graded Nakayama lemma it suffices to verify that the fiber at 0 has the same dimension, which is guaranteed by \eqref{global is isomorphic to fusion}.
\end{proof}


\begin{prop}
The isomorphism \eqref{global is isomorphic to fusion} holds for local Weyl modules.
\end{prop}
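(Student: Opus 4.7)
Plan: Set $\Lambda := \la_1 + \cdots + \la_n$. I will show that both the local product $R\otimes_\cA\bC_0$ and the fusion product $W_{\la_1}\ast\cdots\ast W_{\la_n}(\bc)$ are isomorphic to the local Weyl module $W_\Lambda$.

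First I would build a surjection $W_\Lambda\twoheadrightarrow R\otimes_\cA\bC_0$. For each $i$, formula \eqref{g-action on W[t]} shows that the cyclic vector $w_{\la_i}\otimes 1\in W_{\la_i}[t]$ satisfies the defining relations of $\bW_{\la_i}$—$\fn_+[t]$-annihilation, weight $\la_i$, and $(f_\al\otimes 1)^{(\la_i,\al^\vee)+1}(w_{\la_i}\otimes 1)=0$—yielding a surjection $\bW_{\la_i}\twoheadrightarrow W_{\la_i}[t]$. Combining these with \eqref{cyclic product of global Weyls} and Lemma \ref{cyclic product of quotients is quotient} produces
\[
\bW_\Lambda\simeq\bW_{\la_1}\odot\cdots\odot\bW_{\la_n}\twoheadrightarrow W_{\la_1}[t]\odot\cdots\odot W_{\la_n}[t]=R(W_{\la_1},\ldots,W_{\la_n}).
\]
Since this map sends cyclic vector to cyclic vector and is $\fh[t]$-equivariant, it descends to a surjection of algebras $\cA(\Lambda)\twoheadrightarrow\cA(\la_1,\ldots,\la_n)$ taking the maximal ideal at $0$ onto the maximal ideal at $0$; specializing at $\bC_0$ thus yields a surjection $W_\Lambda\twoheadrightarrow R\otimes_\cA\bC_0$.

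Next I would match dimensions. Applying Theorem \ref{main theorem for global weyl} to both $\bW_\Lambda$ and to each $\bW_{\la_i}$ (freeness over the corresponding highest-weight algebra plus the identification of the generic fiber with a tensor product of fundamental local Weyl modules) yields
\[
\dim W_\Lambda=\prod_j(\dim W_{\omega_j})^{m_j}=\prod_i\dim W_{\la_i},
\]
where $\Lambda=\sum_j m_j\omega_j$. On the other hand, Theorem \ref{specialisation at generic point} combined with upper-semicontinuity of fiber dimension on the irreducible variety $\Spec\cA(\la_1,\ldots,\la_n)$ (irreducibility from Proposition \ref{fact about A}) gives $\dim(R\otimes_\cA\bC_0)\geq\prod_i\dim W_{\la_i}$. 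Hence the surjection just constructed is an isomorphism, and $R\otimes_\cA\bC_0\simeq W_\Lambda$.

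Finally, for any $\bc$ with pairwise distinct coordinates the cyclic vector $\otimes_i w_{\la_i}$ of the fusion product satisfies exactly the defining relations of $W_\Lambda$: $\fn_+[t]$-annihilation and $f_\al^{(\Lambda,\al^\vee)+1}(\otimes_i w_{\la_i})=0$ are inherited from $\bigotimes W_{\la_i}(c_i)$, while $\fh\otimes t\bC[t]$-annihilation appears in the associated graded, since $ht^m$ acts on $\otimes_i w_{\la_i}$ by a filtration-degree-zero scalar whose class in the degree-$m$ component of the graded object vanishes. Hence the fusion product is a cyclic quotient of $W_\Lambda$ of dimension $\prod_i\dim W_{\la_i}=\dim W_\Lambda$, so the quotient map is an isomorphism; combined with the previous step this yields $R\otimes_\cA\bC_0\simeq W_{\la_1}\ast\cdots\ast W_{\la_n}(\bc)$.

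The main obstacle I expect is the bookkeeping in the first step: the two highest-weight algebras $\cA(\Lambda)$ and $\cA(\la_1,\ldots,\la_n)$ sit in different ambient polynomial rings, so one must verify carefully that the quotient map between them matches the chosen maximal ideals at $0$ before the two $\bC_0$-specializations can be identified.
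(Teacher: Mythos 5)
Your argument is correct, but it reaches the two key surjections by a different route than the paper. For the surjection $W_{\la_1+\hdots+\la_n}\twoheadrightarrow R(W_{\la_1},\hdots,W_{\la_n})\T_{\cA(\la_1,\hdots,\la_n)}\bC_0$ the paper simply verifies the defining relations of the local Weyl module on the cyclic vector of the local product (the only nontrivial one being $(f_\al t^0)^{(\la_1+\hdots+\la_n,\al^\vee)+1}$, killed term by term in the tensor product), while you globalize: $\bW_{\la_i}\twoheadrightarrow W_{\la_i}[t]$, Kato's identity \eqref{cyclic product of global Weyls}, Lemma \ref{cyclic product of quotients is quotient}, and then specialization at $0$ via the surjection $\cA(\la_1+\hdots+\la_n)\twoheadrightarrow\cA(\la_1,\hdots,\la_n)$ --- this works (both maximal ideals at $0$ are generated by the classes of $ht^m$, $m\ge 1$, so the two $\bC_0$-specializations agree), but it is heavier and leans on \eqref{cyclic product of global Weyls} and Theorem \ref{main theorem for global weyl}(ii), whereas the paper's check is self-contained. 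For the complementary bound, the paper uses Proposition \ref{global surjects to fusion} together with the FL2/Naoi isomorphism $W_{\la_1}\ast\hdots\ast W_{\la_n}\simeq W_{\la_1+\hdots+\la_n}$, which it cites; you instead bound $\dim\bigl(R\T\bC_0\bigr)$ from below by Theorem \ref{specialisation at generic point} plus semicontinuity of fiber dimension, and then rederive the FL2/Naoi isomorphism from the dimension formula $\dim W_\la=\prod_j(\dim W_{\om_j})^{m_j}$ implied by Theorem \ref{main theorem for global weyl} --- legitimate, since that theorem already rests on CI/FL2/Naoi, so the logical dependence is comparable, and your route has the mild bonus of not invoking Proposition \ref{global surjects to fusion} at all. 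Two small points you should make explicit: the semicontinuity step needs $R$ to be a finitely generated $\cA(\la_1,\hdots,\la_n)$-module (this is established in the proof of Theorem \ref{specialisation at generic point}) together with an argument that the good locus of Theorem \ref{specialisation at generic point} has dense image in $\Spec\cA(\la_1,\hdots,\la_n)$ --- or restrict to the $\bC^\times$-orbit through $\bc$ as in the proof of Corollary \ref{global is projective}; and the $n$-fold $\odot$-product should be read as $\U(\g[t])$ applied to the tensor product of cyclic vectors inside the full tensor product, so that Lemma \ref{cyclic product of quotients is quotient} iterates without an associativity issue. Neither is a genuine gap.
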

\begin{proof}
Consider local Weyl modules $W_{\la_1}, \hdots , W_{\la_n}$ with cyclic vectors $w_{\la_1}, \hdots , w_{\la_n}$. 
It was proved in \cite{FL2} for simply laced case and in \cite{Naoi} for non-simply laced case that for arbitrary pairwise distinct $(c_1, \hdots , c_n)$ one
has
\[
W_{\la_1} \ast \hdots \ast W_{\la_n} (c_1, \hdots , c_n) \simeq W_{\la_1 + \hdots + \la_n}.\] 
So it suffices to show the surjection (the surjectivity in converse direction holds due to Proposition \ref{global surjects to fusion})
\[
R(W_{\la_1}, \hdots , W_{\la_n}) \T_{\cA(\la_1, \hdots , \la_n)} \bC_0 \twoheadleftarrow W_{\la_1 + \hdots + \la_n}.
\]
It is clear that the relations $\mathfrak{n}^+[t].\bigl( \T_{i = 1}^n w_{\la_i} \bigr) = 0$ and $t\h[t]. \bigl( \T_{i = 1}^n w_{\la_i} \bigr) = 0$ hold in the lefthand side module. It suffices to show that for any $\alpha \in R_+$
\begin{multline*}
(f_\alpha t^0)^{(\la_1 + \hdots + \la_n, \alpha^\vee) + 1}. \bigl( \T_{i = 1}^n w_{\la_i} \bigr) \\
 = \sum_{i_1 + \hdots + i_n = (\la_1 + \hdots + \la_n, \alpha^\vee) + 1} (f_\alpha t^0)^{i_1} w_{\la_1} \T \hdots \T (f_\alpha t^0)^{i_n}. w_{\la_n} = 0,
\end{multline*}
which is also clear because in each summand at least one of inequalities $i_j \leq (\la_j, \alpha^\vee)$ does not hold.
\end{proof}

\section{Global Demazure modules} \label{GDm}

In this section, we introduce global Demazure modules as a particular case of global modules, prove the analog of 
Theorem \ref{main theorem for global weyl} in types ADE and give the relations that are expected to be defining for these modules.

From now on we assume that $\g$ is a simple Lie algebra of type ADE and $\la = \sum_{i = 1}^{r} m_i \omega_i$ 
is its dominant integral weight.

\begin{definition}
We define the global Demazure module of level $l$ and weight $l\la$ as
\[
\bD_{l, \la} = R( \underbrace{D_{l, \om_1}, \hdots , D_{l, \om_1}}_{m_1}, \hdots , \underbrace{D_{l, \om_r}, \hdots , D_{l, \om_r}}_{m_r}) 
 = \bigodot_{i = 1}^r D_{l, \om_i}[t]^{\odot m_i}.
\]
\end{definition}

In type A there is an isomorphism $D_{l, \omega_i} \simeq V_{l\omega_i}(0)$, so the global Demazure module $\bD_{l, \la}$ 
can be defined as $\bigodot_{i = 1}^r V_{l\omega_i}[t]^{\odot m_i}$.

As explained in \cite{FL2}, in types ADE one has the isomorphism $D_{1, \la} \simeq W_\la$, so we have $\bD_{1, \la} \simeq \bW_\la$.

We denote the natural cyclic vector of $\bD_{l, \la}$ simply by $v$.

As explained in Remark \ref{linear independent weights}, the algebra of highest weights, acting on this module, is isomorphic to $\bC[z_1, \hdots , z_{|\la|}]^{S_{m_1} \times \hdots \times S_{m_r}}$. Note that it does not depend on $l$, so we denote it by $\cA_\la$.

By Theorem \ref{specialisation at generic point} for $\bc$ in some Zariski-open subset of $\bC^{|\la|}$ one has
\[
\bD_{l, \la} \T_{\cA_\la} \bC_\bc \cong \bigotimes_{i = 1}^{r} \bigotimes_{j=1}^{m_i} D_{l, \omega_i}(c_{m_1+\dots+ m_{i-1}+j}),
\]
and by Proposition \ref{global surjects to fusion} there is a surjection
\begin{equation}\label{surj}
\bD_{l, \la} \T_{\cA_\la} \bC_0  \twoheadrightarrow D_{l, \la}.
\end{equation}

Our next goal is to prove that \eqref{surj} is an isomorphism.
In what follows we use the standard notation for elementary symmetric polynomials: $e_k(x) = e_k(x_1, \hdots, x_n) = \sum_{j_1 < \hdots < j_k} x_{j_1}\hdots x_{j_k}$.

\begin{prop} \label{specialisation of global demazure at 0}
For simply-laced $\fg$ one has an isomorphism of $\fg[t]$-modules
\[
\bD_{l, \la} \T_{\cA_\la} \bC_0  \simeq D_{l, \la}.
\]
\end{prop}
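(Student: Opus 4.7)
The plan is to combine the surjection \eqref{surj} with a reverse surjection $D_{l,\la} \twoheadrightarrow \bD_{l,\la} \T_{\cA(\la)} \bC_0$, which together force an isomorphism. To establish the reverse surjection, I would use the Chari--Venkatesh presentation \eqref{relations} of $D_{l,\la}$ and verify that the cyclic vector $\bar v$ of the specialization satisfies all the defining relations: those of $W_{l\la}$, together with the two Demazure-type relations $(f_\al \T t^{(\la,\al)})\bar v = 0$ and $(f_\al \T t^{(\la,\al)-1})^{l+1}\bar v = 0$ for every $\al \in R_+$.

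First, I would handle the $W_{l\la}$ relations, which are largely inherited. The equations $\fn_+[t].\bar v = 0$ and $h.\bar v = l\la(h)\bar v$ follow from the tensor factors of $\bD_{l,\la}$; the relation $\fh \T t\bC[t].\bar v = 0$ is exactly the content of specialization at $0$, since the $\cA(\la)$-action on the cyclic vector is induced by $\U(\fh[t])$ and $\cA(\la)_{>0}$ is killed at $0$; and $(f_\al)^{l(\la,\al^\vee)+1}.\bar v = 0$ is a multinomial computation on $\bigotimes_k (v_{\om_{i_k}} \T 1)$, using the vanishing $(f_\al)^{l(\om_{i_k}, \al^\vee)+1} v_{\om_{i_k}} = 0$ in each factor together with the equality $\sum_k l(\om_{i_k}, \al^\vee) = l(\la, \al^\vee)$.

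The core of the argument lies in the two Demazure relations. The idea is to expand $f_\al \T t^m$ via formula \eqref{g-action on W[t]} on each factor $v_{\om_{i_k}} \T 1 \in D_{l,\om_{i_k}}[t]$, using the vanishing $(f_\al \T t^j) v_{\om_{i_k}} = 0$ for $j \geq (\om_{i_k}, \al)$ (which can be derived from \eqref{relations} by commuting $f_\al \T t^{(\om_{i_k},\al)}$ with $h_\al \T t^{j-(\om_{i_k},\al)}$). This reduces each action to a sum of ``leading'' terms $(f_\al t^s \cdot v_{\om_{i_k}}) \T t^{m-s}$ with $s < (\om_{i_k},\al)$. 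The symmetric polynomial framework introduced just before the proposition---Schur polynomials, the identity $s_{(1^k)} = e_k$, and the alternant expansion---will be used to package the resulting multinomial sums in terms of elementary and Schur polynomials in the $\cA(\la)$-generators; at the critical exponents, the goal is to exhibit every term as lying in $\U(\fg[t]).(\cA(\la)_{>0}.\bar v)$, which vanishes in the specialization.

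The hardest step will be the second Demazure relation $(f_\al \T t^{(\la,\al)-1})^{l+1}\bar v = 0$: the multinomial expansion distributes $l+1$ copies of $f_\al \T t^{(\la,\al)-1}$ among $|\la|$ tensor factors under the level-$l$ constraint from each $D_{l,\om_{i_k}}$, and identifying the resulting combinatorial sum as a Schur-polynomial-valued element of $\cA(\la)_{>0} \cdot \bar v$ will require a nontrivial Jacobi--Trudi-type identity (this is presumably why the alternant notation $a_\mu$ was set up just above).
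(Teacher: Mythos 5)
Your overall frame is the same as the paper's: combine the surjection \eqref{surj} with a reverse surjection $D_{l,\la}\twoheadrightarrow \bD_{l,\la}\T_{\cA(\la)}\bC_0$, obtained by checking the Chari--Venkatesh relations \eqref{relations} (plus the Weyl-module relations, which you handle correctly) on the cyclic vector of the specialization. But there are two genuine gaps in the execution. First, you propose to compute directly in the tensor product of the general-type factors $D_{l,\om_{i_k}}[t]$, using only the vanishing $(f_\al\T t^j)v_{\om_{i_k}}=0$ for $j\ge(\om_{i_k},\al)$ and ``the level-$l$ constraint from each factor.'' These relations do not by themselves give you a tractable (commutative, polynomial-like) model of $\U(\msl_2(\al)[t]).v_{\om_{i_k}}\subset D_{l,\om_{i_k}}$ when $(\om_{i_k},\al)\ge 2$, which does occur in types $D$ and $E$; in that case two generators $f_\al, f_\al t$ act on the factor and their mixed powers are not controlled by the relations you list. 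The paper avoids this by a two-step reduction: by Lemma \ref{cyclic product of quotients is quotient} the $\msl_2(\al)[t]$-span of the cyclic vector of $\bD_{l,\la}$ is a quotient of an $\msl_2$ global Demazure module, and for $\msl_2$ the fundamental level-$l$ Demazure module is the evaluation module $V_{l\om}(0)$, so $\bD_{l,n\om}\simeq V_{l\om}[t]^{\odot n}$ sits inside $\bC[u_1,\ldots,u_n,z_1,\ldots,z_n]/(u_1^{l+1},\ldots,u_n^{l+1})$ with $ft^i\mapsto\sum_k u_kz_k^i$ and $ht^i$ acting by a scalar times $\sum_k z_k^i$. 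Only in this commutative model do the symmetric-function manipulations you envisage become literal polynomial identities; your sketch never establishes such a model.

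Second, and more seriously, the heart of the proof --- showing $(f_\al t^{(\la,\al)-1})^{l+1}$ kills the cyclic vector after specialization --- is exactly the step you defer to ``a nontrivial Jacobi--Trudi-type identity,'' so the core argument is missing. In the paper's model the statement becomes $(ft^{n-1})^{l+1}\in(\bC[z_1,\ldots,z_n]^{S_n}_+)$, and the mechanism is not Jacobi--Trudi but Vandermonde inversion: one solves the linear system expressing $ft^0,\ldots,ft^{n-1}$ through $u_1,\ldots,u_n$ by cofactors, obtaining $\sum_{j=1}^n(-1)^jft^{j-1}e^{(i)}_{n-j}(z)=\pm\prod_{k\ne i}(z_i-z_k)\,u_i$ (after dividing by the alternant $a^{(i)}_\delta$), raises this to the $(l+1)$-st power so that $u_i^{l+1}=0$ kills the right-hand side, and then sums over $i$; after summation the coefficient of $(ft^{n-1})^{l+1}$ is the nonzero constant $\pm n$ while all other coefficients are symmetric polynomials with zero constant term, which yields the claim. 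Similarly, the first relation $ft^n\in(\bC[z]^{S_n}_+)$ uses the concrete identity $z_i^n=\sum_{j=1}^n(-1)^{j+1}e_j(z)z_i^{n-j}$, which your sketch only gestures at. Until you either reproduce this computation (or an equivalent identity) and justify the reduction to the commutative $\msl_2$ model, the proposal is a plausible plan rather than a proof.
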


\begin{proof}
It suffices to construct the surjection $ D_{l, \la} \twoheadrightarrow \bD_{l, \la} \T_{\cA_\la} \bC_0$. By \eqref{relations}
it is enough to show that $f_\al t^{(\la,\alpha)}.v = 0$ 
for every $\al \in R_+$.

Consider any $\mathfrak{sl}_2$-triple $\mathfrak{sl}_2(\al) = (f_\al, h_\al, e_\al)$. Let $\om^\al$ be its fundamental weight. 
Note that for any fundamental $\g[t]$-Demazure module $D_{l, \om_i}$ the corresponding $\msl_2(\al)[t]$-module 
$\U(\msl_2(\al)[t]).v \subset D_{l, \om_i}$ is a quotient of the $\msl_2(\al)[t]$-Demazure module $D_{l, (\om_i,\al)\om^\al}$
(here we use that $\g$ is of type ADE). 
Hence, by Lemma \ref{cyclic product of quotients is quotient}, the $\msl_2(\al)[t]$-module 
\[
\U(\msl_2(\al)[t]).v \subset \bigodot_{i = 1}^r D_{l, \om_i}[t]^{\odot m_i} = \bD_{l, \la}
\] 
is a quotient of the $\msl_2(\al)[t]$-module $\bigodot_{i = 1}^r D_{l, (\om_i,\al)\om^\al}[t]^{\odot m_i} = \bD_{l, (\la, \al)\omega^\al}$.
Thus, in order to prove our Proposition it is enough to show that required relations hold in global Demazure module for $\msl_2[t]$.


Let $I_{h_\al}$ be the left ideal of $\U(\mathfrak{sl}_2(\al)[t])$, generated by $h_\al t^i$ for all $i > 0$. We show that 
$f_\al t^{(\la,\alpha)}.v  \in I_{h_\al}.v$ in $\bD_{l, (\la, \al)\om^\al}$.

From now on we consider the $\mathfrak{sl}_2[t]$-module $\bD_{l, n\om}$ isomorphic to $V_{l\om}[t]^{\odot n}$. 
We identify $V_{l\om}[t]$ with $\bC[u, z]/(u^{l + 1})$ by the following isomorphism: 
\begin{multline}
\bigl( P_f (ft^0, \hdots, ft^i, \hdots) P_h(ht^0, \hdots, ht^i, \hdots) \bigr).v_{l\om} t^0 \mapsto \\
P_f (uz^0, \hdots, uz^i, \hdots) P_h(l(\la, \al)z^0, \hdots, l(\la, \al)z^i, \hdots)
\end{multline}
for any polynomials $P_f, P_h$. Under this identification $V_{l\om}[t]^{\odot n}$ can be viewed as a subring of the quotient ring
$$\bC[u_1, \hdots , u_n, z_1, \hdots , z_n]/(u_1^{l + 1}, \hdots, u_n^{l + 1}),$$
generated by $ht^i = l(\la, \alpha)(z_1^i + \hdots + z_n^i)$ and $ft^i = u_1z_1^i + \hdots + u_nz_n^i$. 
Note that $I_{h}$ is its ideal  $(\bC[z_1, \hdots, z_n]_+^{S_n})$, generated by symmetric polynomials in $z_1, \hdots, z_n$ 
with zero free term. We need to prove that $ft^n$ 
belongs to this ideal.

Consider the polynomial 
$\prod_{i = 1}^{n}(x - z_i) = x^n + \sum_{j = 1}^{n} (-1)^j e_j(z)x^{n - j}$. Obviously, each of $z_i$ is its root and hence 
$z_i^n = \sum_{j = 1}^{n} (-1)^{j+1} e_j(z)z_i^{n - j}$. Hence
\begin{multline*}
ft^n = u_1z_1^n + \hdots + u_nz_n^n \\
= u_1 \big( \sum_{j = 1}^{n} (-1)^{j+1} e_j(z)z_1^{n - j}  \big) + \hdots + u_n \big( \sum_{j = 1}^{n} (-1)^{j+1} e_j(z)z_n^{n - j}  \big) \\
= \sum_{j = 1}^{n} (-1)^{j + 1} (ft^{n - j}) e_j(z) \in  \bC[z_1, \hdots, z_n]_+^{S_n}.
\end{multline*}

This finishes the proof.

\end{proof}

So, we have an analogue of Theorem \ref{main theorem for global weyl} for global Demazure modules.
\begin{thm} \label{global demazure is free}
	Let $\la = \sum_{i = 1}^{r} m_i \om_i$.
	
	\begin{enumerate}
		\item[i)] There is a free action of an algebra $\cA_\la \simeq \bC[z_1, \hdots, z_{|\la|}]^{S_{m_1} \times \hdots \times S_{m_r}}$ 
		on $\bD_{l, \la}$.
		\item [ii)]
		$\bD_{l, \la} \T_{\cA_\la} \bC_0 \cong D_{l, \la}.$
		\item[iii)] For $\bc$ inside some nonempty Zariski-open subset of ${\bC}^{|\la|}$ one has
		\[
		\bD_{l, \la} \T_{\cA_\la} \bC_\bc \cong \bigotimes_{i = 1}^{r} \bigotimes_{j=1}^{m_i} D_{l, \omega_i}(c_{m_1+\dots+m_{i-1}+j}).
		\]
	\end{enumerate}
\end{thm}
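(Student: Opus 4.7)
Parts (ii) and (iii) are essentially bookkeeping on top of what has already been developed. Part (ii) is precisely Proposition \ref{specialisation of global demazure at 0}. Part (iii) follows by applying Theorem \ref{specialisation at generic point} to $\bD_{l,\la}$: since the fundamental weights $\om_1,\dots,\om_r$ are linearly independent, Remark \ref{linear independent weights} identifies $\cA(\la)$ with the symmetric polynomial algebra $\bC[z_1,\dots,z_{|\la|}]^{S_{m_1}\times\cdots\times S_{m_r}}$ and supplies the open locus on which the generic tensor-product description holds.

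The substantive claim is Part (i), freeness of $\bD_{l,\la}$ over $\cA(\la)$. The plan is to verify Conjecture \ref{global is isomorphic to fusion} in the present setting so that Corollary \ref{global is projective} delivers projectivity, and then to upgrade projectivity to freeness via Quillen--Suslin (legitimate because $\cA(\la)$ is a polynomial algebra in $|\la|$ variables by the Chevalley--Shephard--Todd theorem). To verify the conjecture for the modules $W_i=D_{l,\om_{k_i}}$ in type ADE, I would start from the surjection of Proposition \ref{global surjects to fusion},
\[
\bD_{l,\la}\T_{\cA(\la)}\bC_0\twoheadrightarrow D_{l,\om_{i_1}}\ast\cdots\ast D_{l,\om_{i_n}}(c_1,\dots,c_n).
\]
By the already proved Part (ii) the left-hand side is isomorphic to $D_{l,\la}$, and by the known identification of fusion products of level $l$ fundamental Demazure modules with the corresponding higher weight Demazure module in type ADE (see \cite{FL2,CL,CV,Naoi}), the right-hand side is also $D_{l,\la}$. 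A surjection between finite-dimensional modules of equal dimension is then an isomorphism, confirming the conjecture in our setting.

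The main obstacle is this external numerical input: neither the equality of dimensions $\dim D_{l,\la}=\prod_i(\dim D_{l,\om_i})^{m_i}$ nor the isomorphism between fusion and Demazure modules is visible from the intrinsic arguments of Section \ref{Gm}, and must be imported from the cited literature on fusion products in types ADE. Once it is available, the remaining chain of implications---Nakayama's lemma, the scaling-plus-semi-continuity argument from Corollary \ref{global is projective}, and Quillen--Suslin---is formal, and the three parts of the theorem together assemble into the desired global analogue of Theorem \ref{main theorem for global weyl}.
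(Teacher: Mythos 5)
Your proposal is correct and takes essentially the same route as the paper: parts (ii) and (iii) are exactly Proposition \ref{specialisation of global demazure at 0} and Theorem \ref{specialisation at generic point} combined with Remark \ref{linear independent weights}, and part (i) follows from Corollary \ref{global is projective} (once Conjecture \ref{global is isomorphic to fusion} is verified for the fundamental Demazure modules) together with Quillen--Suslin, just as the paper notes. The external input you flag---the identification of the fusion product of level $l$ fundamental Demazure modules with $D_{l,\la}$ in type ADE---is precisely what the paper itself imports from the literature in the surjection \eqref{surj}, so your accounting of what is intrinsic and what is cited matches the paper's.
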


\begin{proof}
	Direct consequence of Theorem \ref{specialisation at generic point}, Proposition \ref{specialisation of global demazure at 0}, and Lemma \ref{global is projective}.
\end{proof}

For $\la = \sum_{i = 1}^{r} m_i \om_i$ we define $(q)_\la=\prod_{i=1}^r (q)_{m_i}$, where $(q)_m=\prod_{i=1}^m (1~-~q^i)$.

\begin{cor}
	One has
	\[
	\mathrm{ch} \bD_{l, \la} = \ch \cA_\la \ch D_{l, \la} =  \frac{1}{(q)_\la} \mathrm{ch} D_{l, \la}.
	\]
\end{cor}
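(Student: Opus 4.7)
The plan is to extract the character identity directly from the structural facts just established in the preceding theorem, namely that $\bD_{l,\la}$ is a free $\cA(\la)$-module and that its specialization at $0$ is isomorphic to $D_{l,\la}$ as a $\fg[t]$-module. First I would recall that the $\cA(\la)$-action on $\bD_{l,\la}$ is defined through the $\U(\fh[t])$-action on the cyclic vector, so $\cA(\la) \subset \U(\fh[t])$ inherits the $t$-grading (with $ht^j$ of degree $j$), and the $\cA(\la)$-action on $\bD_{l,\la}$ is by homogeneous operators. Moreover, this action commutes with the $\fg[t]$-action and in particular with the Cartan grading from $\fh \T 1$, so both the $q$-degree and the $\fg$-weight are compatible with the $\cA(\la)$-module structure.

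Next I would invoke graded freeness: since $\bD_{l,\la}$ is free over the nonnegatively graded connected algebra $\cA(\la)$, graded Nakayama allows one to lift a homogeneous $\bC$-basis of the fiber $\bD_{l,\la}\T_{\cA(\la)}\bC_0$ to a homogeneous $\cA(\la)$-basis of $\bD_{l,\la}$. Using part (ii) of the theorem, this fiber is identified with $D_{l,\la}$ as a graded $\fg$-module (the $\fg$-action on the fiber and on $D_{l,\la}$ are compatible because the $\fg$-action commutes with the $\cA(\la)$-action). Consequently, as a bigraded $\fg$-module,
\[
\bD_{l,\la} \;\cong\; \cA(\la)\,\T_{\bC}\, D_{l,\la},
\]
where $\cA(\la)$ carries only the $q$-grading and trivial $\fg$-action. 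Taking characters yields $\ch \bD_{l,\la} = \ch\cA(\la)\cdot\ch D_{l,\la}$, which is the first equality in the corollary.

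Finally, I would compute $\ch \cA(\la)$ explicitly. Under the isomorphism $\cA(\la)\simeq \bC[z_1,\dots,z_{|\la|}]^{S_{m_1}\times\cdots\times S_{m_r}}$ from Remark \ref{linear independent weights}, the variables $z_{n_1+\cdots+n_{i-1}+j}$ all have degree $1$ (they arise from $ht^1$), so the Hilbert series factors as a product, one factor per orbit of the symmetric group acting on the respective block of variables. The classical Hilbert series of the algebra of symmetric polynomials in $m$ variables is $\prod_{i=1}^m (1-q^i)^{-1} = 1/(q)_m$. Multiplying these over $i=1,\dots,r$ gives
\[
\ch \cA(\la) \;=\; \prod_{i=1}^r \frac{1}{(q)_{m_i}} \;=\; \frac{1}{(q)_\la},
\]
completing the second equality. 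No serious obstacle is expected: the only point requiring care is verifying that the lifted basis respects both the $q$-grading and the $\fg$-weight grading, which is immediate from the fact that $\cA(\la)$ acts by $\fg$-equivariant, $q$-homogeneous operators.
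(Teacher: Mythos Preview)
Your proof is correct and supplies exactly the argument the paper leaves implicit: the corollary is stated without proof as a direct consequence of the preceding theorem, and the reasoning you give (freeness of $\bD_{l,\la}$ over the graded algebra $\cA(\la)$, identification of the fiber at $0$ with $D_{l,\la}$, and the standard Hilbert series computation for $\cA(\la)$) is precisely what is intended. One tiny wording quibble: the variables $z_j$ carry degree $1$ not because they ``arise from $ht^1$'' but because under the isomorphism $ht^m \mapsto \sum_j \la_j(h) z_j^m$ the element $ht^m$ has $t$-degree $m$; your conclusion is correct regardless.
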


In the rest of this section we discuss the defining relations for global Demazure modules.

\begin{cor}[from the proof of Proposition \ref{specialisation of global demazure at 0}] \label{relations hold in global demazure}
The following relations hold in $\bD_{l, \la}$:
\begin{multline*}
\qquad\mathfrak{n}_+[t].v = 0,\quad ht^0.v = l \la(h)v\quad \forall h\in\fh, \quad f_\al^{l(\la,\alpha)+1}.v=0\quad \forall \al\in R_+, \\
\sum_{j = 0}^{(\la, \al)} (-1)^j (f_\al t^{(\la, \al) - j}) 
E_j\Big(\frac{h_\al t}{l(\la, \al)}, \hdots, \frac{h_\al t^{(\la, \al)}}{l(\la, \al)}\Big) .v = 0 \quad \forall  \al \in R_+, 
\end{multline*}
where $E_j$ is a polynomial, satisfying 
\[
E_j(z_1 + \hdots + z_n, \hdots , z_1^n + \hdots + z_n^n) = e_j(z_1, \hdots, z_n).
\] 
\end{cor}

\begin{proof}
These relations were shown to hold in $\msl_2$-case and each $h_\al t^i$ acts as $l(\la, \al)(z_1^i + \hdots + z_n^i)$ 
in the corresponding $\bD_{l, (\la, \al)\om^\al}$ viewed as 
\[\bC[z_1, \hdots , z_{(\la, \al)}, u_1, \hdots u_{(\la, \al)}]/(u_1^{l + 1}, \hdots , u_{(\la, \al)}^{l + 1}).
\]
\end{proof}

We are also able to find the sufficient set of relations for the global Demazure modules. 

\begin{prop}
The global Demazure module $\bD_{l, \la}$ is the quotient of $\bW_{l \la}$ by the defining relations:
\begin{equation} \label{defining relations for global demazure}
\sum_{j = 0}^{(\la, \al)} (-1)^j (f_\al t^{ k + (\la, \al) - j}) 
E_j\Big(\frac{h_\al t}{l(\la, \al)}, \hdots, \frac{h_\al t^{(\la, \al)}}{l(\la, \al)}\Big) .v = 0 
\end{equation}
for all $\al \in R_+, k \in \bZ_{\geq 0}$.
\end{prop}
\begin{proof}
Denote by $\bD_{l, \la}'$ the quotient of $\bW_{l \la}$ by relations \eqref{defining relations for global demazure}. Let $v'$ be the cyclic vector of $\bD_{l, \la}'$. We are to prove that $\bD_{l, \la}' \simeq \bD_{l, \la}$.

First we check that relations \eqref{defining relations for global demazure} hold in $\bD_{l, \la}$. 
Note that relation \eqref{defining relations for global demazure} for $k = 0$ holds in $\bD_{l, \la}$ by Corollary \ref{relations hold in global demazure}. Furthermore, we use that there is a well-defined right $\h[t]$-action on $\bD_{l, \la}$. In particular, for any roots $\alpha,\beta$ we have:
\begin{multline*}
0 = \sum_{j = 0}^{(\la, \al)} (-1)^j (f_\al t^{(\la, \al) - j})  E_j\Big(\frac{h_\al t}{l(\la, \al)}, \hdots, \frac{h_\al t^{(\la, \al)}}{l(\la, \al)}\Big) . (h_\beta t^k v) \\
= h_\beta t^k \sum_{j = 0}^{(\la, \al)} (-1)^j (f_\al t^{(\la, \al) - j}) E_j\Big(\frac{h_\al t}{l(\la, \al)}, \hdots, \frac{h_\al t^{(\la, \al)}}{l(\la, \al)}\Big) .v \\
+ (\al, \beta) \sum_{j = 0}^{(\la, \al)} (-1)^j (f_\al t^{ k + (\la, \al) - j}) 
E_j\Big(\frac{h_\al t}{l(\la, \al)}, \hdots, \frac{h_\al t^{(\la, \al)}}{l(\la, \al)}\Big) .v \\
= (\al, \beta) \sum_{j = 0}^{(\la, \al)} (-1)^j (f_\al t^{ k + (\la, \al) - j}) 
E_j\Big(\frac{h_\al t}{l(\la, \al)}, \hdots, \frac{h_\al t^{(\la, \al)}}{l(\la, \al)}\Big) .v,
\end{multline*}
as required. Thus, we obtain that a natural surjection 
\begin{equation} \label{D' to D}
\bD_{l, \la}' \twoheadrightarrow \bD_{l, \la}.
\end{equation}
From our computation we also conclude that the left ideal generated by relations \eqref{defining relations for global demazure} is invariant under the right multiplication by algebra $U(\h[t]) = S(\h[t])$, and therefore $\bD_{l, \la}'$ admits the right action of this algebra. Commuting relations \eqref{defining relations for global demazure} for simple root $\al$ with $e_\al t^0$, we obtain that for any $k \geq 0$ there is a relation of the form $\bigl( ht^{k + (\la, \al)} - P_\al(ht^1, \hdots, ht^{k + (\la, \al) - 1}) \bigr).v' = 0$ in $\bD_{l, \la}'$ for some polynomial $P_\al$.
This implies that 
\[
\mathrm{ch} \bigl( U(\h[t]) / \mathrm{Ann}_{U(\h[t])} v' \bigr) \leq \frac{1}{(q)_\la} = \ch \cA_\la
\]
(we mean coefficientwise inequality of characters here). However, the quotient $U(\h[t]) / \mathrm{Ann}_{U(\h[t])} v'$ is naturally identified with the highest weight space of $\bD_{l, \la}'$, and restricting map \eqref{D' to D} we get a surjective map $U(\h[t]) / \mathrm{Ann}_{U(\h[t])} v' \twoheadrightarrow \cA_\la$.

Therefore, we obtain that $U(\h[t]) / \mathrm{Ann}_{U(\h[t])} v' \simeq \cA_\la$, and $\bD_{l, \la}'$ is a $\cA_\la$-module. By the very definition, we see that its fiber at zero $\bD_{l, \la}' \T_{\cA_\la} \bC_0$ is a cyclic module, subject to relations
\begin{gather*}
\qquad\mathfrak{n}_+[t].v' = 0,\quad ht^0.v' = l \la(h)v'\quad \forall h\in\fh, \quad f_\al^{l(\la,\alpha)+1}.v'=0\quad \forall \al\in R_+, \\
f_\al t^{k + (\la, \al)}  .v' = 0 \quad \forall  \al \in R_+, k \in \bZ_{\ge 0},
\end{gather*}
and hence $\bD_{l, \la}' \T_{\cA_\la} \bC_0 \simeq D_{l, \la}$. By the graded Nakayama lemma, we get that $\bD_{l, \la}'$ has $\dim D_{l, \la}$ generators as a $\cA_\la$-module. Taking into account that it can be surjected to a free $\cA_\la$-module of the same rank (recall Theorem \ref{global demazure is free} and map \eqref{D' to D} ), we obtain that \eqref{D' to D} is an isomorphism, as required.
\end{proof}

\begin{rem} 
It is interesting if relations \eqref{defining relations for global demazure} can be simplified. In particular, we do not know if there can be chosen a finite set of defining relations.
\end{rem}

\section{Arc spaces}\label{AS}

\subsection{Embeddings of semi-infinite flag variety}

Recall that the homogeneous coordinate ring of the Pl{\"u}cker embedding $G/B \hookrightarrow \prod_{i = 1}^{r} \bP(V_{\omega_i})$ is isomorphic to $\bigoplus_{\la\in P_+} V_\la^*$.
One can show also that the homogeneous coordinate ring of the map $G/B \rightarrow \prod_{i = 1}^{n} \bP(V_{\la_i})$ for arbitrary dominant integral weights $\la_1, \hdots , \la_n$ is isomorphic to 
\[
\bigoplus_{k_1, \hdots, k_n \geq 0} V_{k_1\la_1 + \hdots + k_n \la_n}^*.
\]
Note that the map $G/B \rightarrow \prod_{i = 1}^{n} \bP(V_{\la_i})$ is injective if and only if each fundamental weight occurs 
in at least one $\la_i$.

It turns out that the generalization of these facts to the semi-infinite case involves the notion of global modules introduced above.


Recall the semi-infinite flag variety $\fQ$.
\begin{prop} \label{coordinate ring of general embedding}
Fix	arbitrary dominant integral weights $\la_1, \hdots, \la_n$.
The homogeneous coordinate ring of the map 
\[
\fQ \rightarrow \prod_{i = 1}^{n} \bP(V_{\la_i}[[t]])
\]
for arbitrary dominant integral weights $\la_1, \hdots, \la_n$ is isomorphic to
\[
\bigoplus_{k_1, \hdots, k_n \geq 0} (V_{\la_1}[t]^{\odot k_1} \odot \hdots \odot V_{\la_n}[t]^{\odot k_n})^*.
\]	
\end{prop}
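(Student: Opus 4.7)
The plan is to reduce the computation to the result of \cite{Kato} that the homogeneous coordinate ring of the standard embedding $\fQ\hookrightarrow\prod_j\bP(\overline{\bW}_{\om_j})$ is $\bigoplus_\la\bW_\la^*$. The first step is to observe that for each dominant $\la_i$ there is a $\fg[t]$-equivariant surjection $\bW_{\la_i}\twoheadrightarrow V_{\la_i}[t]$ sending cyclic vector to cyclic vector; this follows from a direct check that the defining relations of $\bW_{\la_i}$ are satisfied by $v_{\la_i}\otimes 1\in V_{\la_i}[t]$, using the action formula \eqref{g-action on W[t]} and the fact that $V_{\la_i}$ is an evaluation module at~$0$. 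Consequently, the map $\fQ\to\bP(V_{\la_i}[[t]])$ factorises through the embedding $\fQ\hookrightarrow\bP(\overline{\bW}_{\la_i})$, which itself factorises through $\prod_j\bP(\overline{\bW}_{\om_j})$ via iteration of~\eqref{cyclic product of global Weyls}. In particular, the line bundle on $\fQ$ pulled back from $\eO(1)$ on the $i$-th factor coincides with the standard $\eO_{\la_i}$, so by \cite{Kato} one has $H^0(\fQ,\eO_{k_1\la_1+\cdots+k_n\la_n})=\bW^*_{k_1\la_1+\cdots+k_n\la_n}$.

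The $(k_1,\ldots,k_n)$-graded component of the coordinate ring of the image is then the image of the multiplication map
\begin{equation*}
\mu\colon\bigotimes_{i=1}^n\mathrm{Sym}^{k_i}\bigl(V_{\la_i}[[t]]^*\bigr)\longrightarrow \bW^*_{k_1\la_1+\cdots+k_n\la_n}.
\end{equation*}
Identifying the continuous dual of $V_{\la_i}[[t]]$ with the graded algebraic dual $V_{\la_i}[t]^*$, the map $\mu$ is the transpose of the $\fg[t]$-module composition
\begin{equation*}
\bW_{k_1\la_1+\cdots+k_n\la_n}=\bW_{\la_1}^{\odot k_1}\odot\cdots\odot\bW_{\la_n}^{\odot k_n}\hookrightarrow\bigotimes_{i=1}^n\bW_{\la_i}^{\otimes k_i}\twoheadrightarrow\bigotimes_{i=1}^n V_{\la_i}[t]^{\otimes k_i},
\end{equation*}
whose first arrow comes from~\eqref{cyclic product of global Weyls} and whose second arrow is induced by the surjections $\bW_{\la_i}\twoheadrightarrow V_{\la_i}[t]$ built above, together with Lemma~\ref{cyclic product of quotients is quotient}.

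The image of this composite is the cyclic $\fg[t]$-submodule of $\bigotimes_i V_{\la_i}[t]^{\otimes k_i}$ generated by the image of the cyclic vector, which is $\bigotimes_i v_{\la_i}^{\otimes k_i}$; by the very definition of the Cartan product this submodule is exactly $V_{\la_1}[t]^{\odot k_1}\odot\cdots\odot V_{\la_n}[t]^{\odot k_n}$. Dualising gives $\mathrm{image}(\mu)=\bigl(V_{\la_1}[t]^{\odot k_1}\odot\cdots\odot V_{\la_n}[t]^{\odot k_n}\bigr)^*$, which is the desired identification. I expect the main technical point to be the careful handling of topological completions: one has to verify that the continuous dual of $V_{\la_i}[[t]]$ agrees with the graded algebraic dual of $V_{\la_i}[t]$, and that this identification is compatible with tensor products, symmetric powers and the inclusion $V_{\la_i}[t]^*\hookrightarrow\bW_{\la_i}^*$ underlying the factorisation through the embedding of \cite{Kato}; once these compatibilities are in place the remainder is the purely algebraic Cartan-component computation outlined above.
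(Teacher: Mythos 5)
Your argument is correct and follows essentially the same route as the paper: both rest on the identification $H^0(\fQ,\eO_{\la})\simeq\bW_\la^*$ from \cite{BF1,Kato}, realize the $(k_1,\ldots,k_n)$-graded component as a cocyclic quotient of $\bigotimes_i (V_{\la_i}^*[t])^{\otimes k_i}$ mapping into $\bW^*_{k_1\la_1+\cdots+k_n\la_n}$, and dualize to recognize the Cartan component. You merely spell out more explicitly the surjections $\bW_{\la_i}\twoheadrightarrow V_{\la_i}[t]$ and the transpose of the multiplication map, which the paper leaves implicit in the phrase ``from the very definition it is a quotient of $(V_{\la_1}^*[t]^{\otimes k_1}\otimes\cdots\otimes V_{\la_n}^*[t]^{\otimes k_n})$''.
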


\begin{proof}
Let $R_{k_1, \hdots , k_n}^*$ be a homogeneous component of our homogeneous coordinate ring. Since 
$H^0 (\fQ, \mathcal{L}(\la)) \simeq \bW_\la^*$ (see \cite{BF1} for detailes), 
there is an inclusion $R_{k_1, \hdots , k_n}^* \hk \bW_{k_1 \la_1 + \hdots + k_n \la_n}^*$. Thereby, $R_{k_1, \hdots , k_n}^*$ is a cocyclic $\g[t]$ module. But from the very definition it is a quotient of $(V_{\la_1}^*[t]^{\otimes k_1} \otimes \hdots \otimes V_{\la_n}^*[t]^{\otimes k_n})$. Hence its dual is isomorphic to $(V_{\la_1}[t]^{\odot k_1} \odot \hdots \odot V_{\la_n}[t]^{\odot k_n})$ and we are done.
\end{proof}

Let us give another semi-infinite generalization of the finite-dimensional embedding $G/B \rightarrow \prod_{i = 1}^{n} \bP(V_{\la_i})$.
Let $\overline{\bD}_{l, \la}$ be the completion of the global Demazure module with respect to the $t$-grading
(for example, $\overline{\bD}_{1, \la}\simeq \overline{\bW}_\la$).

\begin{prop} \label{embedding to arbitrary demazures}
Fix dominant integral weights $\la_1, \hdots, \la_n$ and $l \geq 1$. The homogeneous coordinate ring of the map
\[
\fQ \rightarrow \prod_{i = 1}^{n} \bP( \overline{\bD}_{l, \la_i})
\]
is isomorphic to 
\[
\bigoplus_{k_1, \hdots , k_n \geq 0} \bD_{l, k_1 \la_1 + \hdots +k_n\la_n}^*.
\]
\end{prop}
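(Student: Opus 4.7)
The plan is to directly mimic the argument of Proposition \ref{coordinate ring of general embedding}, with the local modules $V_{\la_i}[t]$ replaced by the global Demazure modules $\bD_{l,\la_i}$ and with an $l$-fold shift in the weights of the relevant line bundles on $\fQ$.

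First I would identify the pullbacks of the tautological line bundles. Each embedding $\phi_i\colon\fQ\hookrightarrow\bP(\overline{\bD}_{l,\la_i})$ is induced by the cyclic vector of $\overline{\bD}_{l,\la_i}$, whose $\fh$-weight is $l\la_i$; therefore $\phi_i^{*}\mathcal{O}(k_i)\simeq\mathcal{L}(lk_i\la_i)$. The $\bk=(k_1,\dots,k_n)$-graded component $R^{*}_{\bk}$ of the homogeneous coordinate ring is thus contained in $H^{0}(\fQ,\mathcal{L}(l\sum_i k_i\la_i))\simeq\bW^{*}_{l\sum_i k_i\la_i}$ by Kato's theorem, so $R^{*}_{\bk}$ is cocyclic as a $\fg[t]$-module; and from the very definition of the coordinate ring it is simultaneously a quotient of $\bigotimes_{i}(\overline{\bD}_{l,\la_i}^{*})^{\otimes k_i}$.

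Dualising both statements at once exhibits $R_{\bk}$ as a cyclic $\fg[t]$-module generated by the image of $\bigotimes_{i} v_{l\la_i}^{\otimes k_i}$ and sitting inside $\bigotimes_{i}\overline{\bD}_{l,\la_i}^{\otimes k_i}$. By the definition of $\odot$ this identifies $R_{\bk}$ with the iterated Cartan component $\bD_{l,\la_1}^{\odot k_1}\odot\cdots\odot\bD_{l,\la_n}^{\odot k_n}$. Writing $\la_i=\sum_{j} m_{i,j}\om_j$ and unfolding $\bD_{l,\la_i}=\bigodot_{j} D_{l,\om_j}[t]^{\odot m_{i,j}}$, associativity of $\odot$ on cyclic modules rewrites this as $\bigodot_{j} D_{l,\om_j}[t]^{\odot \sum_i k_i m_{i,j}}$, which is exactly $\bD_{l,\sum_i k_i\la_i}$ by definition. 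Passing back to duals gives the claimed isomorphism of graded rings.

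The main obstacle I anticipate is the careful bookkeeping of the duality passage between completed and uncompleted modules: one must verify that the cyclic $\fg[t]$-envelope of $\bigotimes_{i} v_{l\la_i}^{\otimes k_i}$ really lives in the uncompleted tensor product $\bigotimes_{i}\bD_{l,\la_i}^{\otimes k_i}$, so that it literally coincides with the honest Cartan component $\bigodot_{i}\bD_{l,\la_i}^{\odot k_i}$ rather than a completion of it. This should be unproblematic because the cyclic vectors themselves lie in the uncompleted $\bD_{l,\la_i}$ and $\U(\fg[t])$ preserves this property; once the completions are handled correctly, the remainder of the argument is formal, parroting the corresponding step for global Weyl modules.
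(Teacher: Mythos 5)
Your proposal is correct and follows essentially the same route as the paper: embed the graded component into $H^0(\fQ,\mathcal{L}(l\sum_i k_i\la_i))\simeq \bW^*_{l\sum_i k_i\la_i}$ to get cocyclicity, observe it is a quotient of the tensor product of duals, dualize to identify it with the Cartan component $\bD_{l,\la_1}^{\odot k_1}\odot\cdots\odot\bD_{l,\la_n}^{\odot k_n}$, and use the definition of global Demazure modules together with associativity of $\odot$ to recognize this as $\bD_{l,\,k_1\la_1+\cdots+k_n\la_n}$. The only difference is that you make explicit some points the paper leaves implicit (the pullback of $\mathcal{O}(1)$ and the completion bookkeeping), which is harmless.
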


\begin{proof}
The argument as in the proof of Proposition \ref{coordinate ring of general embedding} shows that the homogeneous component of our ring 
is cocyclyc quotient of $(\bD_{l, \la_1}^{\otimes k_1})^* \otimes \hdots \otimes (\bD_{l, \la_n}^{\otimes k_n})^*$, and hence its dual is 
$\bD_{l, \la_1}^{\odot k_1} \odot \hdots \odot \bD_{l, \la_n}^{\odot k_n} = \bD_{l, k_1 \la_1 + \hdots +k_n\la_n}$.
\end{proof}

The particular case of Proposition \ref{embedding to arbitrary demazures} gives us
\begin{cor} \label{embedding to fundamental demazures}
The homogeneous coordinate ring of the embedding
\[
\fQ \hk \prod_{i = 1}^{r} \bP(\overline{\bD}_{l, \om_i})
\]
is isomorphic to
\[
\bD_l^* = \bigoplus_{\la\in P_+} \bD_{l, \la}^*.
\]
\end{cor}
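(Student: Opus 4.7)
The plan is to apply Proposition \ref{embedding to arbitrary demazures} directly with $n=r$ and the choices $\la_i=\om_i$ for $i=1,\dots,r$. Substituting these choices into the formula for the homogeneous coordinate ring yields
\[
\bigoplus_{k_1,\ldots,k_r\ge 0} \bD_{l,\,k_1\om_1+\cdots+k_r\om_r}^{\,*}.
\]
This is already nearly the answer claimed in the corollary.

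To match it with the stated form $\bigoplus_{\la\in P_+}\bD_{l,\la}^{\,*}$, I would invoke the obvious bijection $(k_1,\ldots,k_r)\leftrightarrow k_1\om_1+\cdots+k_r\om_r$ between $\bZ_{\ge 0}^r$ and $P_+$, and simply reindex the direct sum by $\la\in P_+$. The resulting ring is, by definition, $\bD_l^{\,*}$.

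The only remaining point is to justify that the map really is an embedding, as indicated by the hooked arrow. This follows because every fundamental weight $\om_i$ appears among the chosen weights (namely as $\om_i$ itself), which is the semi-infinite analog of the classical injectivity criterion for $G/B\to\prod_j\bP(V_{\la_j})$ recalled at the start of Section \ref{AS}. No new work is required here; the substantive content, i.e.\ the identification of each graded piece with the dual of a global Demazure module, has already been carried out in Proposition \ref{embedding to arbitrary demazures} using the definition of $\odot$ together with Lemma \ref{cyclic product of quotients is quotient}. So there is no genuine obstacle: the corollary is a direct specialization plus a trivial reindexing, with the injectivity of the embedding being the only side observation needed.
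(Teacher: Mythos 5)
Your proposal is correct and coincides with the paper's own argument: the corollary is stated there precisely as the special case $n=r$, $\la_i=\om_i$ of Proposition \ref{embedding to arbitrary demazures}, with the direct sum over $(k_1,\ldots,k_r)\in\bZ_{\ge 0}^r$ reindexed by $\la=\sum_i k_i\om_i\in P_+$. The injectivity remark is the same side observation implicit in the paper's use of the hooked arrow, so nothing further is needed.
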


Note that in type A each $\bP(\overline{\bD}_{l, \om_i})$ coincides with $\bP(V_{l\om_i}[[t]])$, so the embedding in the last 
Corollary may be viewed as $\fQ \hk \prod_{i = 1}^{r} \bP(V_{l\om_i}[[t]])$.

\begin{rem}
Note that all the rings which are stated to be coordinate rings here, are reduced, because they are subrings of $\bigoplus_{\la \in P_+} \bW_\la^*$.
\end{rem}




\subsection{Arc spaces for Veronese curves}

In this section we consider the case $\fg=\msl_2$.
Let us describe the reduced structure of the arc space of the Veronese curve of degree $l$. We use the fact that the 
semi-infinite flag variety of $SL_2$ is $\bP(\bC^2[[t]])$.

Consider the semi-infinite Veronese map
\[
\begin{aligned}
\mathbb{P}(V_\omega[[t]]) &\longrightarrow \mathbb{P}( V_{l \omega}[[t]]) \\
(a(t) : b(t)) &\longmapsto (a^l(t) : a^{l - 1}(t)b(t) : \hdots : b^l(t)),
\end{aligned}
\]
where $a(t)=\sum_{i\ge 0} a^{(i)}t^i$, $b(t)=\sum_{i\ge 0} b^{(i)}t^i$. 
\begin{rem}
To be precise, a point of  $\bP(\bC^2[[t]])$ with homogeneous coordinates being the coefficients of the series 
$a(t)$ and $b(t)$  are mapped to the point of $\mathbb{P}( V_{l \omega}[[t]])$ with homogeneous coordinates given by the coefficients 
of the series $a^i(t)b^{l-i}(t)$ for $0\le i\le l$.
\end{rem}
Dually, we have the map of graded rings:
\[
\begin{aligned}
\nu_l: \mathbb{C}[x_0^{(i)}, \hdots , x_l^{(i)}]_{i\ge 0} &\longrightarrow \mathbb{C}[a^{(i)}, b^{(i)}]_{i\ge 0}, \\
x_j(t) &\longmapsto a^{l - j}(t)b^j(t),
\end{aligned}
\]
where $x_j(t)=\sum_{i\ge 0} x_j^{(i)}t^i$ and in the last line we mean that the coefficient of $t^i$ from the left-hand side is mapped to the corresponding coefficient 
on the right-hand side.

In what follows for any polynomial ring or its homogeneous quotient $R$ we denote its $n$-th graded component with respect to usual 
degree grading by $R_n$.

Recall that from Corollary \ref{embedding to fundamental demazures} we have an isomorphism of $\msl_2$-modules:
\[
(\bC[x_0^{(i)}, \hdots , x_l^{(i)}] / \ker \nu_{l})_n \simeq \bD_{l, n\om}^*.
\]

We introduce the $q-$grading by $\deg_q x_j^{(i)} = i$ and let the $\mathfrak{sl}_2$-weight of $x_j^{(i)}$ 
be equal to $(2j - l)$. Let us denote the homogeneous component with $q$-grading equal $k$ and $\msl_2$-weight equal $s$ by 
$(\mathbb{C}[x_0^{(i)}, \hdots , x_l^{(i)}] / \ker \nu_l)_{n, k, s}$.
Then
\[
\sum_{k, s} q^k e^{s \omega} \dim (\mathbb{C}[x_0^{(i)}, \hdots , x_l^{(i)}] / \ker \nu_l)_{n, k, s} = \ch_q \bD_{l, n\om} = 
\frac{1}{(q)_n} \ch D_{l, n\omega},
\]
and (see \cite{FF}):
\[
\mathrm{ch} D_{l, n \omega} =  \sum_{\substack{-ln \leq a \leq ln\\ a \equiv ln \bmod\; 2 }} \binom{\mathbf{n}}{a}_q e^{a \omega},
\]
for $\mathbf{n} = (0, \hdots , 0, n) \in \mathbb{Z}_{\ge 0}^l$, where $\binom{\mathbf{n}}{a}$ refers to Schilling-Warnaar q-supernomial coefficient \cite{SW}, defined by
\begin{multline*}
\binom{\mathbf{n}}{a}_q = \sum_{j_1 + \hdots + j_l = \frac{a + ln}{2}}  q^{\sum_{k = 2}^{l} j_{k - 1}(n - j_k)} 
\binom{n}{j_l}_q \binom{j_l}{j_{l - 1}}_q \hdots \binom{j_2}{j_1}_q.
\end{multline*}

Our strategy is to describe the ideal $I$ of relations inside $\ker \nu_l$ such that 
$\mathrm{ch}_q ( \mathbb{C}[x_0^{(i)}, \hdots , x_l^{(i)}] / I )_n \leq \ch_q \bD_{l, n\om}$. 
That will imply that $I = \ker \nu_l$, since the inclusion $I \subset \ker \nu_l$ implies the converse inequality.

\begin{prop} \label{relations of semi-infinite veronese}
	For any $0 \leq s, r \leq l$ such that $r - s \geq 2$ and $0 \leq w \leq r - s - 2$ the coefficients of the series
	\[
	Q_{s, r, w} = \sum_{u = s}^{r - 1} (-1)^u \binom{r - s - 1}{u - s} \frac{d^w x_{u}(t)}{d t^w}  x_{r + s - u}(t)
	\]
	belong to $\ker \nu_l$. 
\end{prop}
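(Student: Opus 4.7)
The strategy is to show $\nu_l(Q_{s,r,w})=0$ as a formal series in $t$ for every $0\le w\le r-s-2$ by packaging all such expressions, as $w$ varies, into one generating series in an auxiliary variable $y$. Write $A=a(t)$, $B=b(t)$, $\tilde A=a(t+y)$, $\tilde B=b(t+y)$. Since $\nu_l(x_j(t))=A^{l-j}B^{j}$ and $\nu_l$ commutes with $d/dt$, Taylor's formula $\sum_{w\ge 0}(y^w/w!)\,d^w f/dt^w=f(t+y)$ gives
\[
\sum_{w\ge 0}\frac{y^w}{w!}\,\nu_l(Q_{s,r,w})=\sum_{u=s}^{r-1}(-1)^u\binom{r-s-1}{u-s}\,\tilde A^{l-u}\tilde B^{u}\,A^{l-r-s+u}B^{r+s-u}.
\]

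Now set $n=r-s-1$ and reindex $u=s+v$ with $0\le v\le n$. The monomial $\tilde A^{l-s-n}A^{l-s-n-1}\tilde B^{s}B^{s+1}$ is independent of $v$ and factors out of every summand, while the $v$-dependent remainder is $(-1)^{v}\tilde A^{\,n-v}A^{v}\tilde B^{v}B^{\,n-v}=(\tilde A B)^{n-v}(-A\tilde B)^{v}$. The binomial theorem then collapses the inner sum, giving
\[
\sum_{w\ge 0}\frac{y^w}{w!}\,\nu_l(Q_{s,r,w})=(-1)^{s}\,\tilde A^{l-s-n}A^{l-s-n-1}\tilde B^{s}B^{s+1}\,(\tilde A B-A\tilde B)^{n}.
\]

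The crux is the trivial but decisive observation that the Wronskian-like expression $\tilde A B-A\tilde B=a(t+y)b(t)-a(t)b(t+y)$ vanishes at $y=0$, hence is divisible by $y$ in $\bC[a^{(i)},b^{(i)}][[y,t]]$. Consequently $(\tilde A B-A\tilde B)^{n}$ is divisible by $y^{n}$, and the whole generating series carries no terms of $y$-degree less than $n=r-s-1$. The hypothesis $w\le r-s-2$ is exactly $w<n$, so the coefficient of $y^{w}/w!$, which is $\nu_l(Q_{s,r,w})$, must be zero. The only step that needs any insight is spotting the right generating-function packaging; once the expression has been reassembled, the binomial collapse and the divisibility by $y$ are immediate, so I do not anticipate a substantive obstacle.
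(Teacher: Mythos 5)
Your proof is correct, and it takes a genuinely different route from the paper. The paper's argument first reduces to proving the identity after replacing $(a,b)$ by $(ha,hb)$ (checking via the Leibniz rule how $\nu_l(Q_{s,r,w})$ transforms under such a rescaling), then specializes $h=1/a$ so that everything is expressed through the single function $g=b/a$, expands $\frac{d^w g^u}{dt^w}$ by the multinomial formula, and finally concludes from the vanishing of the alternating sum $\sum_{v\le u\le r-s-1}(-1)^u\binom{r-s-1}{u}\binom{u}{v}=0$, valid precisely for $v\le r-s-2$. You instead treat all $w$ at once: Taylor's shift packages $\sum_{w\ge 0}\frac{y^w}{w!}\nu_l(Q_{s,r,w})$ into an expression in $a(t),b(t),a(t+y),b(t+y)$, the binomial theorem collapses the sum over $u$ into the factor $\bigl(a(t+y)b(t)-a(t)b(t+y)\bigr)^{r-s-1}$, and divisibility of this factor by $y^{r-s-1}$ kills exactly the coefficients with $w\le r-s-2$. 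Your version is entirely formal in $\bC[a^{(i)},b^{(i)}][[t,y]]$, so it avoids the paper's rescaling reduction and the (implicitly invertibility-requiring) substitution $h=1/a$, and it makes the cutoff $w\le r-s-2$ conceptually transparent as the order of vanishing in $y$ of the Wronskian-like factor; the paper's route stays at the level of a fixed $w$ and trades this for an explicit binomial identity. Both establish the statement; yours is arguably the cleaner argument, and the only hygiene points worth stating explicitly in a write-up are that $\nu_l$ commutes with $d/dt$ (both are defined coefficientwise) and that all exponents appearing after the factorization are nonnegative, both of which are immediate.
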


\begin{rem}
	It was proved in \cite{Mu2} that the arc space of a variety, which is a complete intersection with rational singularities, is reduced. The schemes that we consider are the arc spaces of the affine cones of Veronese curves. The affine cone of the Veronese curve of degree $l$ has a rational singularity at the point 0, and it is a complete intersection if and only if $l = 2$. This agrees with our result that relations in the projective ring of Veronese curve of degree $2$ are generated by one series of coefficients of $x_0(t)x_2(t) - x_1(t)^2$.
	
	For arbitrary $l$ and $0 \leq s, r \leq l$, $r - s \geq 2$ there are $r - s - 1$ series of relations in semi-infinite case instead of one relation $x_s x_r - x_{s + 1}x_{r - 1}$ for finite-dimensional Veronese curve. For example, for $l = 3$ there are four relations: 
	\begin{align*}
	&x_0(t)x_2(t) - x_1(t)^2; \\
	&x_1(t)x_3(t) - x_2(t)^2; \\
	&x_0(t) x_3(t) - 2 x_1(t) x_2(t) + x_1(t) x_2(t) = x_0(t) x_3(t) - x_1(t) x_2(t); \\
	&x_0(t)' x_3(t) - 2 x_1(t)' x_2(t) + x_2(t)' x_1(t).
	\end{align*}
	
	Relations with derivatives also appeared in \cite{FeMa1} in the homogeneous coordinate rings of Drinfeld-Pl{\"u}cker embedding of the semi-infinite flag varieties in type A.
	
As explained in \cite{Ko,Pog}, jet ideals $J_\infty(I) \subset \mathbb{C}[x_0^{(i)}, \hdots , x_l^{(i)}]$ are in correspondence 
with differential ideals of the differential polynomial algebra $\mathbb{C} \{ x_0, \hdots , x_l \}$.
So this theorem can be viewed as a description of algebraic generators of the differential ideal of the Veronese curve. In these terms, the appearance of derivatives is much less surprising.
\end{rem}

\begin{proof}
	We prove that
	\[
	\nu_l(Q_{s, r ,w}) = 
	\sum_{u = 0}^{r - s - 1} (-1)^u \binom{r - s - 1}{u} \frac{d^w a^{l - u}(t)b^u(t)}{d t^w}  a(t)^{l - r - s + u} (t) b^{r + s - u}(t)
	\]
	is equal to $0$ for any functions $a, b$ and their formal derivatives.
	
	Note that it is enough to prove this for functions $h(t) a(t), h(t) b(t)$ for some $h$ instead of $a(t), b(t)$. Indeed,
	\begin{multline*}
	\sum_{u = 0}^{r - s - 1} (-1)^u \binom{r - s - 1}{u} \frac{d^w h(t)^l a^{l - u}(t)b^u(t)}{d t^w} 
	\bigl( h(t)^l a(t)^{l - r - s + u} (t) b^{r + s - u}(t) \bigr) \\
	= \sum_{u = 0}^{r - s - 1} (-1)^u \binom{r - s - 1}{u} \Bigl( \sum_{0 \leq m \leq w} \binom{w}{m} 
	\frac{d^m a^{l - u}(t)b^u(t)}{d t^m} \cdot \frac{d^{w - m} h(t)^l }{dt^{w - m}} \Bigr) \times \\
	\times h(t)^l a(t)^{l - r - s + u} (t) b^{r + s - u}(t) \\
	= \sum_{0 \leq m \leq w} \binom{w}{m} h(t)^l \frac{d^{w - m} h(t)^l }{dt^{w - m}} \nu_l(Q_{s, r, m}).
	\end{multline*}
	
	We will prove required identity for $h = \frac{1}{a(t)}$. Denote $g = \frac{b(t)}{a(t)}$. Then we have:
	\begin{multline*}
	\sum_{u = 0}^{r - s - 1} (-1)^u \binom{r - s - 1}{u} \frac{d^w g^u}{d t^w}  g^{r + s - u} \\
	= \sum_{u = 0}^{r - s - 1} (-1)^u \binom{r - s - 1}{u} g^{r + s - u} \sum_{i_1 + \hdots + i_u = w} \frac{w!}{i_1! \hdots i_u!} \prod_{m = 1}^u \frac{d^{i_m} g}{d t^{i_m}}.
	\end{multline*}
	The last summation goes over all the partitions of $w$ with $u$ nonnegative summands. Suppose there are $j_0$ zeroes, $j_1$ 
	ones and so on in the composition $i_1 + \hdots + i_u = w$. Then we can rewrite the last expression in the form
	\begin{multline*}
	\sum_{u = 0}^{r - s - 1} (-1)^u \binom{r - s - 1}{u} g^{r + s - u} \sum_{\substack{j_0 + j_1 + \hdots = u \\ j_1 + 2j_2 + \hdots = w}} 
	\frac{w!}{\prod_{m = 0}^{\infty} (m!)^{j_m}} \times \\ 
	\binom{u}{j_0} \binom{u - j_0}{j_1} \binom{u - j_0 - j_1}{j_2} \hdots \prod_{m = 0}^\infty \Big( \frac{d^m g}{d t^m} \Big)^{j_m} \\
	= \sum_{0 \leq v \leq t - r - 1} \sum_{\substack{j_1 + j_2 + \hdots = v \\ j_1 + 2j_2 + \hdots = w}} 
	\sum_{\substack{v \leq u \leq r - s - 1 \\ j_0 = u - v}} \frac{w!}{\prod_{m = 1}^{\infty} (m!)^{j_m}} 
	\binom{v}{j_1} \binom{v - j_1}{j_2} \dots \times \\ 
	g^{r + s - v} \prod_{m = 1}^{\infty} \Big( \frac{d^m g}{d t^m} \Big)^{j_m} (-1)^u \binom{r - s - 1}{u} \binom{u}{j_0}.
	\end{multline*}
	The last term can be rewritten as:
	\begin{multline*}
	\sum_{\substack{v \leq u \leq r - s - 1 \\ j_0 = u - v}} (-1)^u \binom{r - s - 1}{u} \binom{u}{j_0} = \sum_{v \leq u \leq r - s - 1} (-1)^u \binom{r - s - 1}{u} \binom{u}{v} \\
	= \binom{r - s - 1}{v} \sum_{v \leq u \leq r - s - 1} (-1)^u \binom{r - s - 1 - v}{u - v} = 0
	\end{multline*}
	and we are done.
	
	
\end{proof}

Denote by $I \subset \mathbb{C}[x_0^{(i)}, \hdots , x_l^{(i)}]$ the ideal, generated by all $Q_{s, r ,w}$. The above proposition claims that 
$I \subset \ker \nu_l$, and therefore 
\[
\mathrm{ch} ( \mathbb{C}[x_0^{(i)}, \hdots , x_l^{(i)}] / I )_n \geq \mathrm{ch} ( \mathbb{C}[x_0^{(i)}, \hdots , x_l^{(i)}] / \ker \nu_l)_n
\]
for any $n$.
Further we estimare $\mathrm{ch} ( \mathbb{C}[x_0^{(i)}, \hdots , x_l^{(i)}] / I )$.

In what follows we denote by $\mathrm{hc}(p)$ the highest component of an element $p$ of a $\mathbb{Z}$-graded algebra $S$. 
For an ideal $J \subset S$ we denote $\mathrm{hc} (J) = (\{ \mathrm{hc} (p) \vert p \in J \}) \subset S$.


We use the Corollary \ref{corollary for lemma of degrees} for $S = \mathbb{C}[x_0^{(i)}, \hdots , x_l^{(i)}]$, gradings 
$\mathrm{deg}_1$, $\mathrm{deg}_2$, $\mathrm{deg}_3$, $\mathrm{deg}'$, defined by:
\begin{align*}
\mathrm{deg}_1 x_j^{(i)} &= 1; \\
\mathrm{deg}_2 x_j^{(i)} &= i; \\
\mathrm{deg}_3 x_j^{(i)} &= 2j - l; \\
\mathrm{deg}' x_j^{(i)} &= j^2,
\end{align*}
and the ideal $I \subset S$. We note that $\mathrm{ch} (\mathbb{C}[x_0^{(i)}, \hdots , x_l^{(i)}]/I)_n$ defined earlier is equal 
to $\sum_{d_2, d_3} q^{d_2} e^{d_3 \omega} \dim (\mathbb{C}[x_0^{(i)}, \hdots , x_l^{(i)}]/I)_{n, d_2, d_3} $.

It is easy to check that all the assertions of Lemma \ref{lemma on degrees} hold and that 
$\mathrm{hc}' Q_{s, r, l} = (-1)^s \frac{d^l x_{s}(t)}{d t^l}  x_{r}(t)$.
Denote $Q'_{s, r, w} = \frac{d^w x_s(t)}{d t^w}  x_r(t)$.
Denote by $I' \subset \mathbb{C}[x_0^{(i)}, \hdots , x_l^{(i)}]$ the ideal, generated by all the coefficients of all the $Q'_{s, r, w}$ 
and let
\[
\mathrm{ch} (\mathbb{C}[x_0^{(i)}, \hdots , x_l^{(i)}]/I')_n  = \sum_{d_2, d_3} q^{d_2} e^{d_3 \omega} 
\dim \bigl( (\mathbb{C}[x_0^{(i)}, \hdots , x_l^{(i)}]/I')_{n, d_2, d_3} \bigr).
\]

Applying Corollary \ref{corollary for lemma of degrees} we get

\begin{cor} \label{ch S/I' >= ch S/I}
	\[
	\mathrm{ch} (\mathbb{C}[x_0^{(i)}, \hdots , x_l^{(i)}]/I')_n \geq \mathrm{ch} (\mathbb{C}[x_0^{(i)}, \hdots , x_l^{(i)}]/I)_n.
	\]
\end{cor}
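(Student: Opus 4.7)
The strategy is a direct application of Corollary \ref{corollary for lemma of degrees} to the setup introduced immediately above the statement, namely $S = \bC[x_0^{(i)}, \hdots, x_l^{(i)}]$ with the gradings $\mathrm{deg}_1, \mathrm{deg}_2, \mathrm{deg}_3, \mathrm{deg}'$ as listed, and $J = I$ with generators $g$ equal to the coefficients $Q_{s,r,w}^{(k)}$ of $t^k$ in $Q_{s,r,w}(t)$, running over all admissible $s, r, w, k$. There are two points to verify: the hypotheses of Lemma \ref{lemma on degrees} hold, and the ideal $(\mathrm{hc}'\, g)$ generated by the top components of the generators coincides with $I'$.

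For the hypotheses, I check that each $Q_{s,r,w}^{(k)}$ is homogeneous with respect to all three of $\mathrm{deg}_1, \mathrm{deg}_2, \mathrm{deg}_3$, which forces $\mathrm{deg}_i\, p = \mathrm{deg}_i\, (\mathrm{hc}'\, p)$ on the relevant graded pieces. Any monomial in $Q_{s,r,w}(t)$ has the form $x_u^{(a)} x_{r+s-u}^{(b)}$ with $s \le u \le r-1$, so $\mathrm{deg}_1 = 2$. The coefficient of $t^k$ in $\frac{d^w x_u(t)}{dt^w} x_{r+s-u}(t)$ only picks up monomials with $a+b = k+w$, so $\mathrm{deg}_2 = k+w$. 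Finally $\mathrm{deg}_3 = (2u-l) + (2(r+s-u)-l) = 2(r+s-l)$, independent of $u$. Hence each $Q_{s,r,w}^{(k)}$ is genuinely $(\mathrm{deg}_1, \mathrm{deg}_2, \mathrm{deg}_3)$-homogeneous.

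For the identification $(\mathrm{hc}'\, g) = I'$, note that $\mathrm{deg}'(x_u^{(a)} x_{r+s-u}^{(b)}) = u^2 + (r+s-u)^2$, a convex function of $u$, so on the integer interval $[s, r-1]$ its maximum is attained at one of the endpoints. A short calculation gives $(s^2+r^2) - ((r-1)^2 + (s+1)^2) = 2(r-s) - 2 > 0$ since $r - s \ge 2$, so the maximum is uniquely attained at $u = s$. It follows that $\mathrm{hc}'(Q_{s,r,w}^{(k)})$ equals the coefficient of $t^k$ in $(-1)^s \frac{d^w x_s(t)}{dt^w} x_r(t) = (-1)^s Q'_{s,r,w}(t)$, which is a generator of $I'$ up to sign. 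Corollary \ref{corollary for lemma of degrees} then yields the dimension inequality on each $(n, d_2, d_3)$-piece; multiplying by $q^{d_2} e^{d_3 \omega}$ and summing gives the claimed character inequality. The only mildly delicate aspect is arguing coefficient-by-coefficient in $t$, since $I$ and $I'$ are generated by the individual coefficients of the series $Q_{s,r,w}$ and $Q'_{s,r,w}$ rather than by the series themselves; once this bookkeeping is in place, no further input is needed.
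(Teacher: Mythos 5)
Your proposal is correct and takes essentially the same route as the paper: the paper likewise applies Corollary \ref{corollary for lemma of degrees} to $S=\mathbb{C}[x_0^{(i)},\hdots,x_l^{(i)}]$ with the gradings $\mathrm{deg}_1,\mathrm{deg}_2,\mathrm{deg}_3,\mathrm{deg}'$ and the observation $\mathrm{hc}'\,Q_{s,r,w}=(-1)^s\frac{d^w x_s(t)}{dt^w}x_r(t)$, stating the homogeneity checks as routine. Your write-up just supplies the details the paper leaves implicit (homogeneity of the coefficients of $Q_{s,r,w}$ in all three gradings and the convexity/endpoint argument showing the top $\mathrm{deg}'$-component comes only from the $u=s$ term), so no gap remains.
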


Finally, we prove

\begin{prop} \label{ch S/I'}
	\[
	\mathrm{ch} (\mathbb{C}[x_0^{(i)}, \hdots , x_l^{(i)}]/I')_n = \frac{1}{(q)_n} \sum_a e^{a \omega} \binom{\mathbf{n}}{a}_q,
	\]
	where $\mathbf{n} = (0, \hdots, 0, n) \in \mathbb{Z}_{\ge 0}^l$.
\end{prop}

\begin{proof}
	Note that $I'$ is homogeneous with respect to $\mathrm{deg}_{x_v}$ defined by $\mathrm{deg}_{x_v} x_j^{(i)} = \delta_{j, v}$ 
	and we can consider 
	\[
	(\mathbb{C}[x_0^{(i)}, \hdots , x_l^{(i)}]/I')_{n_0, \hdots , n_l} = \{ p \in \mathbb{C}[x_0^{(i)}, \hdots , x_l^{(i)}]/I' \  
	\vert \ \mathrm{deg}_{x_v} = n_v \text{ for } 0 \leq v \leq l \}.
	\]
	Define 
	\begin{multline*}
	\mathrm{ch} (\mathbb{C}[x_0^{(i)}, \hdots , x_l^{(i)}]/I')_{n_0, \hdots , n_l} = \\
	\sum_k q^k \dim \mathrm{span} 
	\bigl( \{ p \in (\mathbb{C}[x_0^{(i)}, \hdots , x_l^{(i)}]/I')_{n_0, \hdots , n_l} \ \vert \ \mathrm{deg_2} p = k \} \bigr).
	\end{multline*}
	Then
	\begin{multline} \label{character as sum over partitions}
	\mathrm{ch} (\mathbb{C}[x_0^{(i)}, \hdots , x_l^{(i)}]/I')_n = \\
	\sum_{n_0 + \hdots + n_l = n} e^{(\sum_{i = 0}^l n_i(2i - l)) \omega} 
	\mathrm{ch} (\mathbb{C}[x_0^{(i)}, \hdots , x_l^{(i)}]/I')_{n_0, \hdots , n_l}.
	\end{multline}
	
	Consider the map of vector spaces
	\[
	\phi: (\mathbb{C}[x_0^{(i)}, \hdots , x_l^{(i)}]/I')_{n_0, \hdots , n_l})^* \rightarrow 
	\mathbb{C}[t_{0, 1}, \hdots , t_{0, n_0},  t_{1, 1}, \hdots , t_{0, n_1}, \hdots , t_{l, n_l}]
	\]
	
	sending $\xi \in (\mathbb{C}[x_0^{(i)}, \hdots , x_l^{(i)}]/I')^*_{n_0, \hdots , n_l}$ to
	\begin{multline*}
	\xi \Bigl( \prod_{v = 1}^{n_0} x_0(t_{0, v}) \hdots \prod_{v = 1}^{n_l} x_l(t_{l, v}) \Bigl) \\
	= \sum_{\substack{ i_{0, 1}, \hdots,  i_{0, n_0} \\ \hdots \\ i_{l, 1}, \hdots,  i_{l, n_l}}} 
	\Bigl( \xi ( x_0^{(i_{0, 1})} \hdots x_0^{(i_{0, n_0})} x_1^{(i_{1, 1})} \hdots x_1^{(i_{1, n_1})} \hdots x_l^{(i_l, n_l)}) \cdot \\
	t_{0, 1}^{i_{0, 1}} \hdots t_{0, n_0}^{i_{0, n_0}} t_{1, 1}^{i_{1, 1}} \hdots t_{1, n_1}^{i_{0, n_1}} \hdots t_{l, n_l}^{i_{l, n_l}} \Bigr).
	\end{multline*}
	
	We claim that:
	\begin{enumerate}
		\item[(i)] $\phi$ is injective;
		\item[(ii)] \[
		\mathrm{Im}(\phi) = \bigl( \prod_{\substack{0 \leq s, r \leq l \\ r - s \geq 2}} 
		\prod_{\substack{1 \leq i \leq n_s \\ 1 \leq j \leq n_r }} (t_{s, i} - t_{r, j})^{r - s - 1} \bigr) \cdot 
		\mathbb{C}[t_{0, 1}, \hdots ,  t_{l, n_l}]^{S_{n_0} \times \hdots \times S_{n_l}},
		\]
		where $\mathbb{C}[t_{0, 1}, \hdots ,  t_{l, n_l}]^{S_{n_0} \times \hdots \times S_{n_l}}$ is the space of polynomials, 
		invariant under permutations of $t_{i, 1}, \hdots , t_{i, n_i}$ for any $i$. 
	\end{enumerate}
The injectivity of $\phi$ can be easily seen from its definition, as $\phi ( \xi )$ differs from $\phi (\xi')$ in case of value of $\xi$ differs from value of $\xi'$ at any monomial $x_0^{(i_{0, 1})} \hdots x_l^{(i_l, n_l)}$.
	
The invariance under permutations of $t_{i, 1}, \hdots , t_{i, n_i}$ is also obvious.
Now note that for any $0 \leq s, r \leq l$ such that $r - s \geq 2$ and $0 \leq w \leq r - s - 2$ we have
\begin{multline*}
\frac{d^w}{d t_{s, i}^w} \Bigl( \xi \bigl( \prod_{v = 1}^{n_0} x_0(t_{0, v}) \hdots \prod_{v = 1}^{n_l} x_l(t_{l, v}) \bigl) 
\vert_{t_{r, j} = t_{s, i}} \Bigr) \\
= \xi \Bigl( \frac{d^w}{d t_{s, i}^w} 
\bigl(  \bigl( \prod_{v = 1}^{n_0} x_0(t_{0, v}) \hdots \prod_{v = 1}^{n_l} x_d(t_{l, v}) \bigr)\vert_{t_{r, j} = t_{s, i}} \bigl)  \Bigr)
=\xi (0) = 0
\end{multline*}
because $Q'_{s, r, w} \in I'$ exactly means that 
\[ \frac{d^w}{d t_{s, i}^w} x_s(t_{s, i}) x_s(t_{s, i}) = 0 \text{ in } 
\mathbb{C}[x_0^{(i)}, \hdots , x_l^{(i)}]/I'. 
\]
	
That implies 
\[
\mathrm{Im}(\phi) \subset \bigl( \prod_{\substack{0 \leq s, r \leq l \\ r - s \geq 1}} 
\prod_{\substack{1 \leq i \leq n_s \\ 1 \leq j \leq n_r }} (t_{s, i} - t_{r, j})^{r - s - 1} \bigr) \cdot 
\mathbb{C}[t_{0, 1}, \hdots ,  t_{l, n_l}]^{S_{n_0} \times \hdots \times S_{n_l}}.
\]
The same observation also shows that we can define
\[
\Phi: (\mathbb{C}[x_0^{(i)}, \hdots , x_l^{(i)}]_{n_0, \hdots , n_l})^* \longrightarrow 
\mathbb{C}[t_{0, 1}, \hdots , t_{l, n_l}],
\]
in the same way as $\phi$, and its image obviously is $\mathbb{C}[t_{0, 1}, \hdots ,  t_{l, n_l}]^{S_{n_0} \times \hdots \times S_{n_l}}$. 
And it is clear that the subspace 
\[
(\mathbb{C}[x_0^{(i)}, \hdots , x_l^{(i)}]/I')^*_{n_0, \hdots , n_l} \subset 
(\mathbb{C}[x_0^{(i)}, \hdots , x_l^{(i)}]_{n_0, \hdots , n_l})^*,
\] 
consists exactly of $\xi$ such that $\phi (\xi)$ is divisible by 
\[
\prod_{\substack{0 \leq s, r \leq l \\ r - s \geq 2}} \prod_{\substack{1 \leq i \leq n_s \\ 1 \leq j \leq n_r }} 
(t_{s, i} - t_{r, j})^{r - s - 2},
\]
which finishes the description of $\mathrm{Im} (\phi)$.

Finally we can write
\begin{multline*}
\mathrm{ch} (\mathbb{C}[x_0^{(i)}, \hdots , x_l^{(i)}]/I')_{n_0, \hdots , n_l} = 
\sum_k q^k \dim (\{ p \in \mathrm{Im}(\phi) \ \vert \ \mathrm{deg} \ p = k \}) \\ 
	= q^{\sum_{r - s \geq 2} n_s n_r (r - s - 1)} \frac{1}{(q)_{n_0} \hdots (q)_{n_l}},
\end{multline*}
and using \eqref{character as sum over partitions}:
\begin{multline*}
\mathrm{ch} (\mathbb{C}[x_0^{(i)}, \hdots , x_l^{(i)}]/I')_n \\ 
= \sum_{n_0 + \hdots + n_l = n} e^{(\sum_{i = 0}^l n_i(2i - l)) \omega} \cdot q^{\sum_{r - s \geq 2} n_s n_r (r - s - 1)} 
\frac{1}{(q)_{n_0} \hdots (q)_{n_l}} \\
= \frac{1}{(q)_n} \sum_a \sum_{\substack {n_0 + \hdots + n_l = n \\ n_0(-l) + \hdots + n_l(l) = a}} 
e^{a \omega} \cdot q^{\sum_{r - s \geq 2} n_s n_r (r - s - 1)} \frac{(q)_n}{(q)_{n_0} \hdots (q)_{n_l}}.
\end{multline*}
Let us introduce new variables $j_0 = 0$, $j_1 = n_0$, $j_2 = n_0 + n_1$, ... , $j_l = n_0 + \hdots + n_{l-1}$, $j_{l + 1} = n$. 
Then we can rewrite in new variables:
\[
\sum_{\substack{0 \leq r, s \leq l \\ r - s \geq 2}} n_s n_r (r - s - 1) = 
\sum_{k = 2}^l (n_0 + \hdots n_{k - 2})(n_k + \hdots n_l) = \sum_{k = 2}^l j_{k - 1}(n - j_k)
\]
and $\sum_{i = 0}^l n_i(2i - l) = 
2 \sum_{i = 1}^{l - 1} j_i  - ln.$
Thus
\begin{multline*}
\mathrm{ch} (\mathbb{C}[x_0^{(i)}, \hdots , x_l^{(i)}]/I')_n \\ 
= \frac{1}{(q)_n} \sum_a \sum_{j_1 + \hdots + j_l = \frac{a + ln}{2}} e^{a \omega} \cdot q^{\sum_{k = 2}^l j_{k - 1}(n - j_k)} 
\binom{n}{j_l}_q \binom{j_l}{j_{l - 1}}_q \hdots \binom{j_2}{j_1}_q \\
	= \frac{1}{(q)_n} \sum_a e^{a \omega} \binom{\mathbf{n}}{a}_q,
	\end{multline*}
	where $\mathbf{n} = (0, \hdots, 0, n) \in \mathbb{Z}_{\ge 0}^l$.
\end{proof} 

We finally state

\begin{thm} \label{reduced structure of Veronese}
 The reduced structure of the semi-infinite Veronese curve is given by relations from Proposition \ref{relations of semi-infinite veronese}.
\end{thm}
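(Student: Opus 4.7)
The plan is to squeeze $I$ between two ideals whose quotients have the same character, forcing $I=\ker\nu_l$. Recall from the discussion preceding the theorem that $(\mathbb{C}[x_0^{(i)},\ldots,x_l^{(i)}]/\ker\nu_l)_n \simeq \mathbb{D}_{l,n\omega}^*$, and hence
\[
\mathrm{ch}(\mathbb{C}[x_0^{(i)},\ldots,x_l^{(i)}]/\ker\nu_l)_n \;=\; \mathrm{ch}_q \mathbb{D}_{l,n\omega} \;=\; \frac{1}{(q)_n}\sum_a e^{a\omega}\binom{\mathbf{L}}{a}_q,
\]
with $\mathbf{L}=(0,\ldots,0,n)\in\mathbb{Z}^l$. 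So the target character is already identified explicitly.

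The first step is to invoke Proposition \ref{relations of semi-infinite veronese}, which gives the inclusion $I\subset\ker\nu_l$. This inclusion automatically produces a surjection $\mathbb{C}[x_0^{(i)},\ldots,x_l^{(i)}]/I\twoheadrightarrow\mathbb{C}[x_0^{(i)},\ldots,x_l^{(i)}]/\ker\nu_l$, and therefore a coefficient-wise inequality of characters
\[
\mathrm{ch}(\mathbb{C}[x_0^{(i)},\ldots,x_l^{(i)}]/I)_n \;\geq\; \mathrm{ch}(\mathbb{C}[x_0^{(i)},\ldots,x_l^{(i)}]/\ker\nu_l)_n.
\]

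The second step is to chain this with the inequality from Corollary \ref{ch S/I' >= ch S/I} and the explicit formula from Proposition \ref{ch S/I'}:
\[
\mathrm{ch}(\mathbb{C}[x_0^{(i)},\ldots,x_l^{(i)}]/I')_n \;\geq\; \mathrm{ch}(\mathbb{C}[x_0^{(i)},\ldots,x_l^{(i)}]/I)_n \;\geq\; \mathrm{ch}(\mathbb{C}[x_0^{(i)},\ldots,x_l^{(i)}]/\ker\nu_l)_n,
\]
where the left and right hand sides are equal to $\frac{1}{(q)_n}\sum_a e^{a\omega}\binom{\mathbf{L}}{a}_q$. Thus all three characters agree, and the surjection in the first step must be an isomorphism on each graded component, giving $I=\ker\nu_l$.

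There is no real obstacle left at this point, since all the nontrivial content has been packaged into Proposition \ref{relations of semi-infinite veronese} (verification that the $Q_{s,r,w}$ vanish on Veronese arcs), Corollary \ref{ch S/I' >= ch S/I} (the passage to the highest-component ideal $I'$ via Lemma \ref{lemma on degrees}), and Proposition \ref{ch S/I'} (the combinatorial computation of $\mathrm{ch}(S/I')_n$ via the Wronskian-type description of $\mathrm{Im}(\phi)$ as a multiple of $\prod(t_{s,i}-t_{r,j})^{r-s-1}$ inside the symmetric polynomials). The theorem then follows by assembling these three ingredients, and the proof is essentially one paragraph writing out the sandwich of characters above.
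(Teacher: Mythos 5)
Your proposal is correct and is essentially identical to the paper's own proof: the same character sandwich $\mathrm{ch}(S/I')_n \geq \mathrm{ch}(S/I)_n \geq \mathrm{ch}(S/\ker\nu_l)_n = \mathrm{ch}_q\,\bD_{l,n\omega}$, assembled from Proposition \ref{relations of semi-infinite veronese}, Corollary \ref{ch S/I' >= ch S/I}, Proposition \ref{ch S/I'}, and the identification of $\mathrm{ch}_q\,\bD_{l,n\omega}$ with $\frac{1}{(q)_n}\sum_a e^{a\omega}\binom{\mathbf{L}}{a}_q$. Nothing further is needed.
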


\begin{proof}
Since $I \subset \ker \nu_l$ we get $\mathrm{ch} ( \mathbb{C}[x_0^{(i)}, \hdots , x_l^{(i)}] / I )_n \geq \ch \bD_{l, n\om}$. 
But from Proposition \ref{relations of semi-infinite veronese}, Corollary \ref{ch S/I' >= ch S/I}, and Proposition \ref{ch S/I'} we have
\begin{multline*}
\mathrm{ch} (\mathbb{C}[x_0^{(i)}, \hdots , x_l^{(i)}]/I)_n \leq \mathrm{ch} (\mathbb{C}[x_0^{(i)}, \hdots , x_l^{(i)}]/I')_n \\ 
= \frac{1}{(q)_n} \sum_a e^{a \omega} \binom{\mathbf{n}}{a}_q = \mathrm{ch} \bD_{l, n\om}.
\end{multline*}
Thus, $I = \ker \nu_l$.
\end{proof}


\appendix

\section{Technical lemmata}

We start with a general lemma form commutative algebra.

\begin{lem} \label{generic freeness}
	Suppose we have a noetherian domain $A$ and 
	an injective morphism $N \hookrightarrow M$ of finitely generated $A$-modules. Then the corresponding morphism of fibers 
	$N \T_A \bC_\bc \rightarrow M \T_A \bC_\bc$ is also injective for $\bc$ in some nonempty Zariski-open subset of $\Spec A$.
\end{lem}
\begin{proof}
	By Grothendieck's generic freeness lemma (see \cite[\S 14.2]{Eis}) both $N$ and $M$ are locally free at some nonempty open 
	subset of $\Spec A$ (here we use that $A$ is a domain and hence $\Spec A$ is irreducible).
	The map between free modules is injective at fiber at a point if and only if the determinant of the corresponding matrix does not vanish at this point. This condition defines an open subset.
\end{proof}

Now we focus on studying the ideals of special kind in polynomial rings. Consider the ring $\bC[z_1, \hdots, z_k]$ and its ideal $I_\bc = (P(z_1, \hdots, z_k) - P(\bc), P \in \bC[z_1, \hdots, z_k]^{S_k})$ for a point $\bc = (\underbrace{c_1, \hdots c_1}_{k_1}, \hdots, \underbrace{c_s, \hdots, c_s}_{k_s})$, $c_i \neq c_j$. It is not hard to see that for the case when $\bc$ has pairwise different coordinates, one has $I_\bc = \bigcap_{\sigma \in S_k} m_{\sigma \bc}$, where $m_a$ stands for the maximal ideal, corresponding to the point $a$, and therefore 
\begin{equation} \label{decomposition for generic c}
\bC[z_1, \hdots, z_k] / I_\bc = \bigoplus_{\sigma \in S_k} \bC_{\sigma \bc}.
\end{equation}
In the opposite case, when all the coordinates are equal (we may assume that $\bc = 0$ in this case), the ring $\bC[z_1, \hdots, z_k] / I_0$ is known to be isomorphic to the flag manifold cohomology ring. In the upcomming lemma we deal with the case of an arbitrary $\bc$.

For any set of indices $j_1, \hdots, j_l$ and a number $a$ we introduce the notation $I_{j_1, \hdots, j_\ell; a} = (P(z_{j_1}, \hdots, z_{j_\ell}) - P(\underbrace{a, \hdots, a}_\ell), P \in \bC[z_{j_1}, \hdots, z_{j_\ell}]^{S_\ell})$.

\begin{lem} \label{ideal decomposition lem}
	One has
	\begin{equation} \label{ideal decomposition}
	I_\bc = \bigcap_{[k] = J_1 \sqcup \hdots \sqcup J_s} (I_{J_1; c_1} + \hdots + I_{J_s; c_s}),
	\end{equation}
	where the intersection goes over all the partitions of the set $[k]$ into subsets $J_1, \hdots, J_s$ such that $|J_i| = k_i$.
\end{lem}

Note that the ideals in the righthand side of \eqref{ideal decomposition} are pairwise relatively prime and hence one can change the intersection sign to the product of ideals sign.
We note also that for the case of $\bc$ with pairwise distinct coordinates this lemma gives the decomposition, described previously, and for the case when $\bc$ has all coordinates equal, this lemma is a tautology. 

\begin{proof}
	Let us denote the ideal in the righthand side of \eqref{ideal decomposition} by $\tilde{I_\bc}$. It is easy to see that any ideal of the form $(I_{J_1; c_1} + \hdots + I_{J_s; c_s})$ contains $I_\bc$ and therefore we have $I \subset \tilde{I}$. To complete the proof we show that $\dim \bC[z_1, \hdots, z_k] / I_\bc = k! = \dim \bC[z_1, \hdots, z_k] / \tilde{I_\bc}$.
	
	Indeed, due to \eqref{decomposition for generic c}, the equality $\dim \bC[z_1, \hdots, z_k] / I_\bc = k!$ holds for generic $\bc$ (with parwise distinct coordinates). On the other hand, this holds for the specific fiber $\bc = 0$, because the corresponding ring is known to be isomorphic to the cohomology ring of the complete flag manifold and its dimension is $k!$. Thus, the equality $\dim \bC[z_1, \hdots, z_k] / I_\bc = k!$ holds for any $\bc$.
	
	Finally, we show that $\dim \bC[z_1, \hdots, z_k] / \tilde{I_\bc} = k!$. For a subset $J = \{j_1, \hdots, j_u\} \subset [k]$ we use the notation $\bC[z_J] = \bC[z_{j_1}, \hdots, z_{j_u}]$. Then by the Chinese remainder theorem we have:
	\begin{multline*}
	\dim \bC[z_1, \hdots, z_k] / \tilde{I_\bc} = \dim \bigoplus_{[k] = J_1 \sqcup \hdots \sqcup J_s} \bC[z_1, \hdots, z_k] / (I_{J_1; c_1} + \hdots + I_{J_s; c_s}) \\
	= \dim \bigoplus_{[k] = J_1 \sqcup \hdots \sqcup J_s} \bigotimes_{i = 1}^s \bC[z_{J_i}] / I_{J_i; c_i} = \sum_{[k] = J_1 \sqcup \hdots \sqcup J_s} \prod_{i = 1}^s k_i! \\
	= \binom{k}{k_1, \hdots, k_s} \prod_{i = 1}^s k_i! = k!.
	\end{multline*}
\end{proof}

Let us now study the highest weight algebras. We consider the algebra $\cA(\underbrace{\mu_1, \hdots, \mu_1}_{n_1}, \hdots, \underbrace{\mu_s, \hdots, \mu_s}_{n_s}) \subset \bC[z_1, \hdots, z_n]$ (here $\mu_i \neq \mu_j$, $n_1 + \hdots + n_s = n$).
Consider points $c^{(n_1)} \in \bC^{n_1}, \hdots, c^{(n_s)} \in \bC^{n_s}$ such that the point $\bc = (c^{(n_1)}, \hdots, c^{(n_s)}) \in \bC^{n}$ has pairwise distinct coordinates.
We would like to study the ring
\begin{equation} \label{quotient ring}
\bC[z_1, \hdots, z_n]/(P - P(\bc), P \in \cA(\underbrace{\mu_1, \hdots, \mu_1}_{n_1}, \hdots, \underbrace{\mu_s, \hdots, \mu_s}_{n_s})).
\end{equation}

\begin{lem} \label{quotient ring lem}
	Suppose $\bc$ is a point as described above. One has the ring isomorphism:
	\begin{equation*} 
	\bC[z_1, \hdots, z_n]/(P - P(\bc), P \in \cA(\underbrace{\mu_1, \hdots, \mu_1}_{n_1}, \hdots, \underbrace{\mu_s, \hdots, \mu_s}_{n_s})) \simeq \bigotimes_{i = 1}^s \bigoplus_{\sigma \in S_{n_i}} \bC_{\sigma c^{(n_i)}}.
	\end{equation*}
	
\end{lem}

\begin{proof}
	
	Consider the decomposition $\mu_i = \sum_j a_i^j \om_j$ to the sum of fundamental weights. Let $A_j = \sum_i a_i^j$. Consider $\cA(\underbrace{\om_1, \hdots, \om_1}_{A_1}, \hdots, \underbrace{\om_r, \hdots, \om_r}_{A_r}) \subset \bC[z_1, \hdots, z_N]$ (here $N = \sum_{j = 1}^r A_j$). Then we get the commutative diagram
	\[
	\begin{tikzcd}
	\cA(\underbrace{\mu_1, \hdots, \mu_1}_{n_1}, \hdots, \underbrace{\mu_s, \hdots, \mu_s}_{n_s}) \arrow[r, hook] & \bC[z_1, \hdots, z_n] \\
	\cA(\underbrace{\om_1, \hdots, \om_1}_{A_1}, \hdots, \underbrace{\om_r, \hdots, \om_r}_{A_r}) \arrow[u, two heads, "\phi"] \arrow[r, hook] & \bC[z_1, \hdots, z_N] \arrow[u, two heads, "\phi"],
	\end{tikzcd}
	\]
	where vertical maps $\phi$ are defined by gluing some of $z_i$'s.

	We realize the ring \eqref{quotient ring} as a quotient of the ring (this quotient map is induced by $\phi$)
	\begin{equation} \label{polynomials quotiend by fundamentals hw algebra}
	\bC[z_1, \hdots, z_N]/(P - P(\tilde{ \bc}), P \in \cA(\underbrace{\om_1, \hdots, \om_1}_{A_1}, \hdots, \underbrace{\om_r, \hdots, \om_r}_{A_r}),
	\end{equation}
	where $\tilde{\bc} = \phi^*(\bc)$ is a closed point of $\Spec \bC[z_1, \hdots, z_N]$.
	Taking into account that $\cA(\underbrace{\om_1, \hdots, \om_1}_{A_1}, \hdots, \underbrace{\om_r, \hdots, \om_r}_{A_r}) = \bC[x_1, \hdots, x_N]^{S_{A_1} \times \hdots \times S_{A_r}}$ due to Remark \ref{linear independent weights}, we can rewrite the ring \eqref{polynomials quotiend by fundamentals hw algebra} as
	\[
	\bigotimes_{i = 1}^r \bC[z_{A_1 + \hdots + A_{i - 1} + 1}, \hdots, z_{A_i}] / (P - P(\tilde{ \bc}), P \in \bC[z_{A_1 + \hdots + A_{i - 1} + 1}, \hdots, z_{A_i}]^{S_{A_i}}).
	\]
	Now using Lemma \ref{ideal decomposition lem}, we obtain that the ring \eqref{quotient ring} is isomorphic to 
	\[
	\bigotimes_{i = 1}^s \bigoplus_{\sigma \in S_{n_i}} \bC_{\sigma c^{(n_i)}}
	\]
	(after obtaining the map $\phi$ (variables gluing) all other summands in decomposition \ref{ideal decomposition} vanish).
\end{proof}

	At last, we prove another technical lemma about degeneration of an ideal.
	Recall the notation $\mathrm{hc}(p)$ for the highest component of an element $p$ of a $\mathbb{Z}$-graded algebra $S$. 
	For an ideal $J \subset S$ we denote $\mathrm{hc} (J) = (\{ \mathrm{hc} (p) \vert p \in J \}) \subset S$.
	
	\begin{lem} \label{lemma on degrees}
		Let $\mathrm{deg}_1, \hdots, \mathrm{deg}_n$ be $\mathbb{Z}$-gradings and $\mathrm{deg}'$ be a $\mathbb{Z}_{\geq 0}$-grading of an algebra $S$ 
		and 
		$J \subset S$ be the homogeneous ideal with respect to gradings $\mathrm{deg}_1, \hdots \mathrm{deg}_n$. 
		Denote by $\mathrm{hc}'(p)$ the highest homogeneous component of $p \in S$ with respect to $\mathrm{deg}'$. 
		Suppose $\mathrm{deg}_i p = \mathrm{deg}_i (\mathrm{hc}' p)$ for any $1 \leq i \leq n$ and $p \in S$.
		Define by $(S/J)_{d_1, \hdots , d_n} \subset S/J$ and  $\left(S/(\mathrm{hc}'(J))\right)_{d_1, \hdots , d_n} \subset S/(\mathrm{hc}'(J))$ the subspaces, consisting of $p$ such that $\mathrm{deg}_i (p) = d_i$ for $1 \leq i \leq n$. If $(S/J)_{d_1, \hdots d_n}$ is finite-dimensional, then
		\begin{equation} \label{unequality on ch by gradings}
		\dim \bigl( \left(S/(\mathrm{hc}'(J))\right)_{d_1, \hdots , d_n} \bigr) \geq \dim \bigl( (S/J)_{d_1, \hdots , d_n} \bigr).
		\end{equation}
	\end{lem}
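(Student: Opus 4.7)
The plan is to use the standard passage from a filtered module to its associated graded. First I would unpack the compatibility hypothesis: saying $\deg_i(p) = \deg_i(\mathrm{hc}'(p))$ for every $\deg_i$-homogeneous $p$ means that the $\deg'$-decomposition of any $(\deg_1, \ldots, \deg_n)$-homogeneous element keeps each $\deg_i$-degree constant. Equivalently, $S$ carries a joint $(\deg_1, \ldots, \deg_n, \deg')$-multigrading. In particular $\mathrm{hc}'(J)$ is $(\deg_1, \ldots, \deg_n)$-homogeneous, so the object $(S/\mathrm{hc}'(J))_{d_1, \ldots, d_n}$ is well-defined, and all the constructions below will cleanly restrict to multigraded pieces.

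Next I would introduce the increasing filtration $F_k S := \bigoplus_{j \leq k} S^{\deg' = j}$, push it down to $\bar{F}_k(S/J) := (F_k S + J)/J$, and restrict to $V := (S/J)_{d_1, \ldots, d_n}$. Since $V$ is finite-dimensional by hypothesis, elementary linear algebra gives $\dim V = \dim \mathrm{gr}\, V$; and by the joint multigrading, $\mathrm{gr}\, V$ coincides with the $(d_1, \ldots, d_n)$-piece of the associated graded of $S/J$.

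The key construction is then a graded surjection $\pi : S/\mathrm{hc}'(J) \twoheadrightarrow \mathrm{gr}(S/J)$, defined by sending a $\deg'$-homogeneous $p$ of degree $k$ to its class in $\bar{F}_k(S/J)/\bar{F}_{k-1}(S/J)$. The map is tautologically surjective and annihilates $\mathrm{hc}'(J)$: if $p = \mathrm{hc}'(q)$ for some $q \in J$ with $\deg' q = k$, then $p - q \in F_{k-1} S$, so $\bar p \equiv \bar q \equiv 0$ in $\bar{F}_k/\bar{F}_{k-1}$. Restricting $\pi$ to the $(d_1, \ldots, d_n)$-multigraded piece (legitimate by compatibility) and combining with the previous step gives the sought inequality
\[
\dim (S/\mathrm{hc}'(J))_{d_1, \ldots, d_n} \geq \dim \mathrm{gr}\, V = \dim V = \dim (S/J)_{d_1, \ldots, d_n}.
\]

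The only real obstacle is the bookkeeping of the first paragraph, i.e.\ making sure that the compatibility assumption actually yields the joint multigrading and, consequently, that $\mathrm{hc}'(J)$, the filtration, and the surjection $\pi$ all respect the $\deg_i$'s; once this is in hand the argument is routine. The conclusion is stated as an inequality because we only use the surjectivity of $\pi$, although in fact $\pi$ is an isomorphism here: its kernel equals $\mathrm{hc}'(J)$ by the identity $\mathrm{hc}'(ab) = \mathrm{hc}'(a)\mathrm{hc}'(b)$ (valid whenever the product on the right is nonzero), but the weaker statement is all that will be needed in the sequel.
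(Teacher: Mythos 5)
Your proof is correct, but it takes a genuinely different route from the paper's. You set up the $\mathrm{deg}'$-filtration on $S/J$, pass to the associated graded, and use the symbol (leading-term) surjection $\pi\colon S/(\mathrm{hc}'(J))\twoheadrightarrow \mathrm{gr}(S/J)$ together with $\dim V=\dim \mathrm{gr}\,V$ for the finite-dimensional piece $V=(S/J)_{d_1,\ldots,d_n}$; this is the standard initial-ideal/Gr\"obner-degeneration formalism, which the paper only alludes to in its remark. The paper instead argues in an elementary, hands-on way: it chooses a basis $q_1,\ldots,q_l$ of $(S/J)_{d_1,\ldots,d_n}$ minimizing $\sum_i \mathrm{deg}'\,q_i$ and shows by an exchange/descent argument that $\mathrm{hc}'q_1,\ldots,\mathrm{hc}'q_l$ remain linearly independent modulo $\mathrm{hc}'(J)$, since a nontrivial relation would allow replacing some $q_k$ by an element of strictly smaller $\mathrm{deg}'$, contradicting minimality. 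Your version buys more structure and makes the paper's remark that the inequality is in fact an equality nearly immediate (by identifying $\ker\pi$ with the ideal of leading terms), while the paper's version avoids introducing filtrations and associated graded algebras. One point you should make explicit: verifying that $\pi$ kills the generators $\mathrm{hc}'(q)$, $q\in J$, suffices only because $\pi$ is multiplicative (the symbol map is an algebra homomorphism), or equivalently because $s\cdot\mathrm{hc}'(q)$ is either $0$ or $\mathrm{hc}'(sq)$ with $sq\in J$; as written you check generators for what is a priori only a linear map. The same observation is what really justifies your closing aside that $\ker\pi=\mathrm{hc}'(J)$, but since that claim is not used, it does not affect the proof.
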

	
	\begin{rem}
		In fact the inequality in \eqref{unequality on ch by gradings} can be changed to equality, but we will not use it.  A similar result is well known and plays a crucial role in the theory of Gr{\"o}bner bases in the case when $S$ is a polynomial ring and $\mathrm{deg}'$ is not a grading but a monomial order.
	\end{rem}
	
	\begin{proof}
		Let $(q_1, \hdots , q_l)$ be a basis over $\mathbb{C}$ of $(S/J)_{d_1, \hdots , d_n}$ such that $\mathrm{deg}'q_1 + \hdots + \mathrm{deg}' q_l$ is minimal through all bases. We prove that the elements $\mathrm{hc}' q_1, \hdots , \mathrm{hc}' q_l$ are linearly independent in $\left(S/(\mathrm{hc}'(J))\right)_{d_1, \hdots , d_n}$ which will imply the desired Lemma.
		
		Indeed, suppose we have $0 \neq T = \sum_{i = 1}^{l} c_i \mathrm{hc}' q_i = \sum_j s_j \mathrm{hc}' (f_j) \in \mathrm{hc}'(J)$ (here $f_j$ are some elements of $J$).
		Then from $T = \sum_{i = 1}^{l} c_i p_i$ we obtain that there exists $k$ such that $c_k \neq 0$ and $\mathrm{deg}' p_k \geq \mathrm{deg}' T$. Thus, from $T = \sum_j s_j \mathrm{hc}' (f_j)$ we obtain that $\mathrm{deg}' T > \mathrm{deg}' (T - \sum_j s_j f_j)$. 
		But it implies that $(q_1, \hdots , q_{k - 1}, T - \sum_j s_j f_j, q_{k + 1}, \hdots , q_l)$ is a basis of $(S/J)_{d_1, \hdots , d_n}$ with $\mathrm{deg}'q_1 + \hdots + \mathrm{deg}' (T - \sum_j s_j f_j) + \hdots \mathrm{deg}' q_l < \mathrm{deg}'q_1 + \hdots + \mathrm{deg}' q_l$. This completes the proof.
	\end{proof}
	
	\begin{cor} \label{corollary for lemma of degrees}
		Suppose that $J = (g_i)$ in assertions of Lemma \ref{lemma on degrees} and $J' = (\mathrm{hc}' g_i)$. 
		Then $\dim \bigl( (S/J')_{d_1, \hdots , d_n} \bigr) \geq \dim \bigl( (S/J)_{d_1, \hdots , d_n} \bigr)$.
	\end{cor}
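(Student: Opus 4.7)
The plan is to deduce this directly from Lemma \ref{lemma on degrees} by comparing the two ideals $J'$ and $\mathrm{hc}'(J)$. The key observation is that $J' \subseteq \mathrm{hc}'(J)$: indeed, each generator $\mathrm{hc}'(g_i)$ of $J'$ lies in the set $\{\mathrm{hc}'(p) : p \in J\}$ (take $p = g_i$), and therefore belongs to $\mathrm{hc}'(J)$ by definition. Since this is an inclusion of homogeneous ideals with respect to the gradings $\mathrm{deg}_1, \hdots, \mathrm{deg}_n$ (the hypothesis $\mathrm{deg}_i p = \mathrm{deg}_i (\mathrm{hc}'(p))$ ensures that $\mathrm{hc}'(J)$ is homogeneous whenever $J$ is), we obtain a surjection of graded quotients $S/J' \twoheadrightarrow S/\mathrm{hc}'(J)$.

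Restricting this surjection to the $(d_1, \hdots, d_n)$-graded component yields
\[
\dim \bigl( (S/J')_{d_1, \hdots, d_n} \bigr) \geq \dim \bigl( (S/\mathrm{hc}'(J))_{d_1, \hdots, d_n} \bigr).
\]
Combining this with Lemma \ref{lemma on degrees}, which gives
\[
\dim \bigl( (S/\mathrm{hc}'(J))_{d_1, \hdots, d_n} \bigr) \geq \dim \bigl( (S/J)_{d_1, \hdots, d_n} \bigr),
\]
establishes the corollary.

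There is essentially no obstacle here: the corollary is a one-line consequence of the lemma together with the elementary inclusion $(\mathrm{hc}' g_i) \subseteq \mathrm{hc}'(J)$. The only point to verify carefully is that $\mathrm{hc}'(J)$ is indeed homogeneous with respect to $\mathrm{deg}_1, \hdots, \mathrm{deg}_n$, so that the surjection is graded and gives dimension inequalities component by component; this follows from the compatibility hypothesis $\mathrm{deg}_i p = \mathrm{deg}_i(\mathrm{hc}'(p))$ already built into Lemma \ref{lemma on degrees}.
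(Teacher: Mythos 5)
Your proposal is correct and follows the paper's own argument: the inclusion $J' \subseteq \mathrm{hc}'(J)$ (each generator $\mathrm{hc}'(g_i)$ lies in $\mathrm{hc}'(J)$ by definition) gives the componentwise inequality $\dim \bigl( (S/J')_{d_1, \hdots , d_n} \bigr) \geq \dim \bigl( (S/\mathrm{hc}'(J))_{d_1, \hdots , d_n} \bigr)$, which combined with Lemma \ref{lemma on degrees} yields the claim. Your extra remark on homogeneity of $\mathrm{hc}'(J)$ is a harmless elaboration of the same one-line proof.
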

	
	\begin{proof}
		It is clear that $I' \subset (\mathrm{hc}'(J))$ and, therefore 
		\[
		\dim \bigl( (S/J')_{d_1, \hdots d_n} \bigr) \geq \dim \bigl( \left(S/(\mathrm{hc}'(J))\right)_{d_1, \hdots d_n} \bigr) \geq  \dim \bigl( (S/J)_{d_1, \hdots d_n} \bigr).
		\]
	\end{proof}

\end{document}